\numberwithin{equation}{section}
\newtheorem{thm}{Theorem}
\newtheorem{lem}{Lemma}[section]
\newtheorem{prop}{Proposition}
\theoremstyle{remark}
\newtheorem{rem}{Remark}
\newcommand{\EQ}[1]{\begin{equation} \begin{split} #1
 \end{split} \end{equation}}
\newcommand{\la}{\lambda}
\newcommand{\al}{\alpha}
\newcommand{\be}{\beta}
\renewcommand{\th}{\theta}
\newcommand{\R}{\mathbb{R}}
\newcommand{\C}{\mathbb{C}}
\renewcommand{\S}{\mathbb{S}}
\newcommand{\E}{\mathcal{E}}
\newcommand{\p}{\partial}
\newcommand{\I}{\infty}
\renewcommand{\bar}{\overline}
\renewcommand{\hat}{\widehat}
\newcommand{\lec}{\lesssim}
\newcommand{\gec}{\gtrsim}
\newcommand{\M}[1]{\vec{#1}}
\newcommand{\h}[1]{\hat{#1}}
\renewcommand{\t}[1]{\tilde{#1}}
\begin{document}

\title[Radial Schr\"odinger maps]
{Global well-posedness for 2D 
radial Schr\"odinger maps into the sphere.}

\author{Stephen Gustafson}
\author{Eva Koo}

\begin{abstract}
We prove global well-posedness for a cubic, non-local
Schr\"odinger equation with radially-symmetric initial
data in the critical space $L^2(\R^2)$, using the framework
of Kenig-Merle and Killip-Tao-Visan. As a consequence, we 
obtain a global well-posedness result for Schr\"odinger maps 
from $\R^2$ into $\S^2$ (Landau-Lifshitz equation) with 
radially symmetric initial data (with no size restriction).  
\end{abstract}

\maketitle

\tableofcontents


\section{Introduction and main results}

The {\it Schr\"odinger map} equation 
\EQ{  \label{SM1}
  \M{u}_t = \M{u} \times \Delta \M{u} }
for maps $\M{u}(x,t) = (u_1(x,t), u_2(x,t), u_3(x,t))$ 
into the $2$-sphere
\[
  \M{u}(\cdot,t) : \Omega \subset \R^n \to
  \S^2 := \{ \; \M{u} \in \R^3 \; | \; |\M{u}|^2 
  = u_1^2+u_2^2+u_3^2 = 1 \; \}
\] 
arises as a continuum model of a ferromagnet, where it is known
as the {\it Hesienberg model}, or (a special case of) the
{\it Landau-Lifshitz equation} \cite{LL,K}. 
From a geometric viewpoint, the 
Schr\"odinger map equation is a generalization of the (free) 
Schr\"odinger equation with the (flat) target space $\C$ 
replaced by a (curved) K\"ahler manifold, in this case $\S^2$. 
To see this, it is helpful to re-write~\eqref{SM1} as
\EQ{ \label{SM2}
  \M{u}_t = -J^{\M{u}} \E'(\M{u}) }
where 
\[
  \E(\M{u}) = \frac{1}{2} \int_{\Omega} |\nabla \M{u}|^2 dx
\]
is the {\it energy} of the map $\M{u}(\cdot,t)$,
\[
  \E'(\M{u}) = -\Delta \M{u} - |\nabla \M{u}|^2 \M{u}
\]
is the gradient of $\E$ (taking into account the geometric constraint 
$|\M{u}| \equiv 1$), and
\[
  J^{\M{u}} : \xi \mapsto \M{u} \times \xi
\]
is a complex structure ($\pi/2$-rotation) on the tangent space
\[
  T_{\M{u}} \S^2 = \{ \; \M{\xi} \in \R^3 \; | \; \M{u} \cdot \M{\xi} = 0 \; \}
\]
to the sphere at $\M{u} \in \S^2$.  
Thus~\eqref{SM1} (or~\eqref{SM2}) is a natural analogue of the Schr\"odinger
equation for maps into $\S^2$, in the same way that the harmonic map
heat-flow $\M{u}_t = \Delta \M{u} + |\nabla \M{u}|^2 \M{u}$
is an analogue of the heat equation, and the wave map equation
$\M{u}_{tt} = \Delta \M{u} + (|\nabla \M{u}|^2 - |\M{u}_t|^2) \M{u}$
is an analogue of the wave equation. 

We take here $\Omega = \R^2$, and consider the Cauchy problem
with initial data in a Sobolev space:
\EQ{  \label{SM}
  \left\{
  \begin{split}
  &\M{u}_t = \M{u} \times \Delta \M{u} \\
  &\M{u}(x,0) = \M{u}_0(x), \quad \M{u}_0 - \h{k} \in H^k(\R^2)
  \end{split} \right. }  
(note that since $|\M{u}| \equiv 1$, we must subtract a point on 
the sphere -- 
here arbitrarily chosen to be $\h{k} = (0,0,1)$ -- in order to
have spatial decay).
For smooth solutions, the conservation of energy
\EQ{ \label{COE}
  \E(\M{u}(t)) = \frac{1}{2} \| \nabla \M{u}(t) \|_{L^2(\R^2)}^2
  \equiv \E(\M{u}_0) 
}
follows immediately from the Hamiltonian form~\eqref{SM2}, 
and the conservation law
\EQ{ \label{COM}
  \| \M{u}(t) - \h{k} \|_{L^2(\R^2)}^2 \equiv \| \M{u}_0 - \h{k} \|_{L^2(\R^2)}^2 
}
(which in Hamiltonian terms comes from invariance of the energy
under rotations of $\S^2$ about the $\h{k}$ axis)
is easily checked.
On $\R^2$, this problem is {\it energy critical}, since the 
scaling 
$\M{u}(x,t) \mapsto \M{u}(\lambda x, \lambda^2 t)$,
which preserves solutions of the Schr\"odinger map equation, 
also preserves the energy: 
\[
  \E(\M{u}(\lambda \cdot)) = \E(\M{u}(\cdot)).
\]

We will prove global well-posedness for~\eqref{SM}
in the {\it radial} case, and for $k=2$: 
\begin{thm}
\label{thm:SM}
Assume $\M{u}_0 = \M{u}_0(r) \in \h{k} + H^2(\R^2)$, 
$r = |x|$.
Then~\eqref{SM} has a unique global solution
$\M{u} \in L^\I_{loc}([0,\infty);H^2(\R^2))$.
\end{thm}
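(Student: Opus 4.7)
The plan, suggested by the abstract, is to reduce the Schr\"odinger map to a scalar, $L^2$-critical, non-local cubic NLS for a derivative field $q(r,t)$ on $\R^2$, to prove global well-posedness and scattering for that equation via the Kenig--Merle/Killip--Tao--Visan induction-on-mass scheme in the radial setting, and then to transfer the bound back to $\M{u}$ in $H^2$. Concretely, I would select an orthonormal frame $(\V{e},J^{\M{u}}\V{e})$ for $T_{\M{u}}\S^2$ along the solution and set $q=\ip{\M{u}_r}{\V{e}}+i\ip{\M{u}_r}{J^{\M{u}}\V{e}}$. Imposing the radial Coulomb gauge $\ip{\p_r\V{e}}{J^{\M{u}}\V{e}}\equiv 0$ kills the spatial connection, and the temporal connection $A_t$ then becomes an explicit non-local functional of $|q|^2$, obtainable by a single radial quadrature. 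Substituting into~\eqref{SM2} gives a schematic equation $iq_t + \Delta q = |q|^2 q + A_t\,q$, with $A_t$ quadratic in the primitive $\int_0^r |q|^2 s\,ds$. Crucially $\|q(t)\|_{L^2(\R^2)}^2 = 2\E(\M{u}_0)$, so the problem for $q$ is mass-critical, and the positive curvature of $\S^2$ manifests as the defocusing sign of the local cubic part.

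For the scalar equation I would run the concentration-compactness program. Local well-posedness and small-mass scattering follow from 2D radial Strichartz estimates (Tao--Visan--Zhang), using that the non-local factor $A_t$ is pointwise bounded by $\|q\|_{L^2}^2$ and so acts as a bounded potential perturbation of the cubic nonlinearity. Failure of global scattering at some threshold mass would then produce, via a radial linear and nonlinear profile decomposition, a minimal-mass, non-scattering, almost-periodic solution $q_*$. The rigidity step must force $q_*\equiv 0$: here I would adapt Killip--Tao--Visan's frequency-localized interaction Morawetz estimate for the 2D radial defocusing mass-critical NLS, verifying that the non-local term carries the favourable sign in the Morawetz identity (again a reflection of curvature) and that its contribution is absorbed by the conserved mass.

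The chief obstacle is this rigidity step, since non-locality forbids a black-box invocation of Killip--Tao--Visan: one has to track non-local commutator and error terms throughout the Morawetz computation, re-derive the reduced Duhamel formula for the gauged equation, and re-check the compactness/in-compactness dichotomy. Once the global space-time bound $\|q\|_{L^4_{t,x}(\R\times\R^2)}<\I$ is secured, persistence of regularity yields $q\in L^\I_{loc}H^1$, and $\M{u}$ is reconstructed by integrating the frame ODE inward from the constant value $\h{k}$ at $r=\I$; this produces $\M{u}-\h{k}\in L^\I_{loc}H^2$. Uniqueness follows from standard Strichartz-based difference estimates for $q$ pulled back through the Lipschitz correspondence $q\mapsto\M{u}$.
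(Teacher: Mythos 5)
Your overall architecture --- a parallel/Coulomb frame reducing the radial map to a non-local cubic NLS for $q$, mass-critical concentration compactness in the style of Killip--Tao--Visan, then transfer back to $\M{u}$ in $H^2$ --- is exactly the one used here. But the rigidity step, which you correctly identify as the chief obstacle, contains a genuine gap, and two of your guiding assertions about the reduced equation are wrong. First, for the sphere target the gauged equation is $i q_t = -\Delta q + r^{-2} q + \bigl(\int_r^\infty |q|^2\,\rho^{-1}\,d\rho - \tfrac12 |q|^2\bigr)q$: the \emph{local} cubic term carries the \emph{focusing} sign, not the defocusing one; the defocusing character only emerges from the combined coefficient $2K-\la\ge 0$ of the non-local and local terms inside the virial/Morawetz identities (you also drop the $r^{-2}$ potential, which is why the paper works with $w=e^{i\th}q$ throughout). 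Second, and more seriously, the non-local equation has \emph{no conserved energy}, whereas the Killip--Tao--Visan rigidity arguments --- in particular those ruling out the self-similar and frequency-cascade scenarios --- hinge on energy conservation once extra regularity of the minimal blowup solution is established. So ``adapting their Morawetz estimate'' and ``absorbing the error by the conserved mass'' is not a viable plan as stated; the absence of a conserved energy is precisely what the paper identifies as its main difficulty and novelty.

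What the paper does instead, after importing the KTV compactness and regularity machinery (with the non-local term controlled via a radial Hardy inequality, and Shao's radial Strichartz estimate replacing bilinear Strichartz where the high-frequency factor falls outside the non-local integral): for the soliton and self-similar cases it proves a localized virial monotonicity $\frac{d}{dt}I_R(q)\gec N(t)^2$, pitting the compactness lower bound $\|\nabla w\|_{L^2}\gec N(t)$ against the regularity upper bound $\|w\|_{\dot H^s}\lec N(t)^s$; for the inverse cascade it constructs a bounded, strictly monotone Morawetz-type functional $P(q)$ that must vanish along sequences where $N(t)\to 0$. You would need a substitute of this kind to close the argument. A smaller point: your claim that the non-local factor is pointwise bounded by $\|q\|_{L^2}^2$ fails near $r=0$ (it is only bounded by $r^{-2}\|q\|_{L^2}^2$); the local theory instead treats it via Hardy's inequality so that it obeys the same dual-Strichartz estimates as the local cubic term.
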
 
To put this in context, the recent announcement~\cite{MRR}
of the construction of (non-radial) finite-time blow-up solutions
of~\eqref{SM} shows we should not expect global well-posedness
for {\it all} (even smooth) data. 
More generally, experience with wave maps
and harmonic map heat-flow suggests that a key to singularity 
formation is the presence of non-trivial static solutions --
that is, harmonic maps -- and a natural conjecture is that
solutions with energy below that of any non-trivial 
harmonic map are global, a conjecture which has been proved
for harmonic map heat-flow \cite{Str} and for wave maps
\cite{ST1,ST2,KS}, but not yet for Schr\"odinger maps.  
Theorem~\ref{thm:SM} is consistent with this general picture,
since there are no non-trivial, {\it radial} harmonic 
maps into $\S^2$. We mention a few more related results.
{\it Equivariant} Shr\"odinger maps of topological
degree $m \geq 3$ with energy slightly above the minimal energy
$4 \pi m$ are in fact global \cite{GNT}. This should be contrasted
with the wave map equation for which finite-time blow-up
is possible in this class \cite{KST,RR}, an indication
that the blow-up question is more subtle for Schr\"odinger maps. 
By \cite{BT}, degree $m=1$ equivariant harmonic maps are unstable
in the energy space, but stable in a stronger topology (which
does not contradict~\cite{MRR}). A conditional 
global well-posedness result for Schr\"odinger maps
appears in~\cite{S}.

Various {\it local} existence results for the Cauchy 
problem~\eqref{SM} with $k$ large enough are available 
~\cite{SSB,McG}. In light of the conservation 
laws~\eqref{COE} and~\eqref{COM},
to prove Theorem~\ref{thm:SM}, it will suffice to obtain
an a priori estimate of $\| D^2 \M{u} \|_{L^2}$
for smooth solutions -- see Section~\ref{global}.

A common strategy for estimating derivatives of maps, used
in various geometric PDE contexts, is to express these
derivatives -- which lie tangent to the target space manifold
-- in a frame on the tangent space chosen so that the
coordinates satisfy a ``familiar'' PDE, whose solutions can be 
estimated ({\it moving frames}). An example is the 
``generalized Hasimoto transform'' of~\cite{CSU}, in which the 
derivative of a {\it radial} solution $\M{u}(r,t)$ of 
equation~\eqref{SM1} is expressed in an orthonormal frame 
$\{ \h{e}(r,t), \; J^{\M{u}(r,t)} \h{e}(r,t) \}$
on $T_{\M{u}(r,t)} \S^2$ which is parallel along
$\M{u}(r,t)$ for each $t$ -- that is $D_r e \equiv 0$
($D_r$ the covariant derivative): 
\[
  T_{\M{u}(r,t)} \S^2 \ni \M{u}_r(r,t) 
  = q_1(r,t) \h{e}(r,t) + q_2(r,t) J^{\M{u}(r,t)} \h{e}(r,t),
\]   
and the resulting coordinates
$q(r,t) = q_1(r,t) + i q_2(r,t)$ satisfy a cubic, 
non-local, Schr\"odinger equation:
\EQ{  \label{NLS1}
  i q_t = -\Delta q + \frac{1}{r^2} q +
  \left( \int_r^\infty |q(\rho,t)|^2 \frac{d \rho}{\rho}
  - \frac{1}{2} |q|^2 \right) q.
}
The precise version of this relation we use is:
\begin{prop}
\label{prop:q}
There is a map $\M{u} \mapsto q = q[\M{u}]$ from 
radial maps with $\M{u}(r) - \h{k} \in H^2(\R^2)$
to complex radial functions $q(r)$ with
$w(x) := e^{i \th} q(r) \in H^1(\R^2)$ 
($(r,\theta)$ polar coordinates on $\R^2$)
such that if $\M{u}(r,t)$ is a (radial) solution of~\eqref{SM1},
then $q(r,t) = q[\M{u}]$ is a (radial) solution of~\eqref{NLS1}.
Further, the $H^1$ and $H^2$ norms of $\nabla \M{u}$ and 
$w = e^{i \th} q$ are comparable: 
\EQ{ \label{norms1}
  \left\{ \begin{array}{c}
  \| w(t) \|_{H^1(\R^2)} \lec \| \nabla \M{u}(t) \|_{H^1(\R^2)} + 
  \| \nabla \M{u}(t) \|_{H^1(\R^2)}^2 \\
  \| \nabla \M{u}(t) \|_{H^1(\R^2)} \lec \| w(t) \|_{H^1(\R^2)}
  + \| w(t) \|_{H^1(\R^2)}^2.  
\end{array} \right. }
\EQ{ \label{norms2}
  \left\{ \begin{array}{c}
  \| w(t) \|_{H^2(\R^2)} \lec \| \nabla \M{u}(t) \|_{H^2(\R^2)} + 
  \| \nabla \M{u}(t) \|_{H^1(\R^2)}^3 \\
  \| \nabla \M{u}(t) \|_{H^2} \lec \| w(t) \|_{H^2(\R^2)}
  + \| w(t) \|_{H^1(\R^2)}^3.  
\end{array} \right. }
Moreover, the map $\M{u} \mapsto q$ is one-to-one: given two
radial maps $\M{u}^A$ and $\M{u}^B$ as above,
if the corresponding associated complex functions agree,
$q^A \equiv q^B$, then so do the original maps, 
$\M{u}^A \equiv \M{u}^B$.
\end{prop}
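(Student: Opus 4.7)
The plan is to construct $q$ explicitly via the generalized Hasimoto moving-frame procedure described in the paragraph preceding the proposition. Since $\M{u}(r) - \h{k} \in H^2(\R^2)$ is radial, Sobolev embedding for radial $H^2$ profiles forces $\M{u}(r) \to \h{k}$ as $r \to \infty$. Fix a unit vector $\h{e}_\infty \in T_{\h{k}}\S^2$ and define $\h{e}(r) \in T_{\M{u}(r)}\S^2$ as the unique solution of the parallel transport ODE $D_r \h{e} = 0$, which in ambient form reads
\[
  \partial_r \h{e} = -(\h{e}\cdot \partial_r\M{u})\M{u}, \qquad \lim_{r\to\infty}\h{e}(r) = \h{e}_\infty;
\]
this automatically preserves $|\h{e}| = 1$ and $\h{e}\cdot\M{u} = 0$. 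Set $q(r) := \partial_r\M{u}\cdot\h{e} - i\,\partial_r\M{u}\cdot(J^{\M{u}}\h{e})$. The extra factor $e^{i\theta}$ in $w = e^{i\theta}q$ has a natural origin: the 2D Wirtinger derivative $\partial_1 + i\partial_2 = e^{i\theta}(\partial_r + i r^{-1}\partial_\theta)$ applied to radial $\M{u}$ gives $e^{i\theta}\partial_r\M{u}$, and expressing this in the parallel frame yields $e^{i\theta}q(r)$; thus $w\in H^1(\R^2)$ is the Cartesian translation of $\nabla\M{u}\in L^2$ together with one extra derivative.

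To derive \eqref{NLS1}, I would exploit the curvature identity $[D_r, D_t]\h{e} = R(\partial_r\M{u}, \partial_t\M{u})\h{e}$ on $\S^2$ (whose sectional curvature equals $1$). Combined with $D_r\h{e} \equiv 0$, this gives a first-order linear ODE in $r$ for $D_t\h{e}$, which I integrate from $\infty$ using $\h{e}(\infty) = \h{e}_\infty$; the curvature term, evaluated on the frame decomposition of $\partial_r\M{u}$ and on $\partial_t\M{u} = \M{u}\times\Delta\M{u}$ (with $\Delta = \partial_r^2 + r^{-1}\partial_r$ in the radial sector), generates exactly the non-local contribution $\int_r^\infty |q(\rho,t)|^2\, d\rho/\rho$. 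Differentiating the definition of $q$ in time, substituting $\partial_t\M{u} = \M{u}\times\Delta\M{u}$ and the expression for $D_t\h{e}$ just obtained, and collecting terms -- with the $r^{-2}q$ term arising from the radial Laplacian combined with the $e^{i\theta}$ rotational factor -- then yields \eqref{NLS1}.

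For the norm equivalences, the identities $|q| = |\partial_r\M{u}|$ and $\partial_r q = \partial_r^2\M{u}\cdot\h{e} - i\partial_r^2\M{u}\cdot J^{\M{u}}\h{e}$ (the derivative-of-frame terms cancel by parallelism) translate $L^2$ norms of $q,\partial_r q$ directly into $L^2$ norms of $\partial_r\M{u},\partial_r^2\M{u}$, while the passage between $q(r)$ and $w(x) = e^{i\theta}q(r)$ converts weighted radial norms into flat $H^s(\R^2)$ norms. The reverse direction reconstructs $\partial_r\M{u}$ and its derivatives from $q$ via the frame; each differentiation of $\h{e}$ inserts the connection coefficient $-(\h{e}\cdot\partial_r\M{u})\M{u}$, which when estimated in $L^\infty$ by Sobolev embedding produces the quadratic (and at the $H^2$ level, cubic) correction terms in \eqref{norms1}--\eqref{norms2}. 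Injectivity follows by reversing the construction: given $q$, a common basepoint $(\h{k}, \h{e}_\infty)$ at $r = \infty$, and the coupled ODE system $\partial_r\M{u} = \Re(q)\h{e} - \Im(q)J^{\M{u}}\h{e}$, $\partial_r\h{e} = -(\h{e}\cdot\partial_r\M{u})\M{u}$, Picard--Lindel\"of uniqueness forces $\M{u}^A \equiv \M{u}^B$.

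The main obstacle will be the careful treatment of the behavior at the origin: the parallel frame $\h{e}(r)$ may rotate without any pointwise limit as $r \to 0$, so $q(r)$ itself need not have a limit there, yet the composite $w(x) = e^{i\theta}q(r)$ must be shown to extend to a genuine $H^1(\R^2)$ function in Cartesian coordinates. This requires quantitatively linking the rotation angle of the frame to the vanishing of $\partial_r\M{u}$ at the origin inherited from $\M{u} - \h{k}\in H^2(\R^2)$, and is also the step where the precise cubic structure of the corrections in \eqref{norms1}--\eqref{norms2} has to be pinned down.
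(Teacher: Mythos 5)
Your overall architecture matches the paper's: a parallel frame $\h{e}$ transported along $\M{u}(\cdot,t)$ with data at $r=\I$, the Hasimoto coordinates $q=\M{u}_r\cdot\h{e}-i\,\M{u}_r\cdot J^{\M{u}}\h{e}$ (the paper, like you, defers the derivation of~\eqref{NLS1} to \cite{CSU}), norm comparison via $|q|=|\M{u}_r|$ and $D_r\M{u}_r=\M{u}_{rr}+|\M{u}_r|^2\M{u}$, and injectivity via uniqueness for the coupled first-order system in $(\M{u},\h{e},J^{\M{u}}\h{e})$. But there is a genuine gap at the very first step, and it propagates to your injectivity argument. You ``define $\h{e}(r)$ as the unique solution of the parallel transport ODE with $\lim_{r\to\I}\h{e}(r)=\h{e}_\I$,'' and later invoke Picard--Lindel\"of uniqueness for a system with basepoint at $r=\I$. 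Solving an ODE from data at infinity requires the coefficient to be integrable in $dr$ on $[R,\I)$, but here the coefficient is $\h{e}\cdot\M{u}_r$ with $\M{u}_r$ only in $L^2(r\,dr)$; the weight works against you at infinity, so Cauchy--Schwarz gives no control on $\int_R^\I|\M{u}_r|\,dr$. Neither existence nor uniqueness of a solution with prescribed limit at $\I$ follows from Picard--Lindel\"of, and a naive Gronwall argument from infinity does not close.

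The paper's fix is the actual content of its Section~\ref{relate}: writing $\h{e}=\h{i}+\t{e}$, $\M{u}=\h{k}+\t{u}$, the non-integrable part of the right-hand side is exactly $-(\t{u}_1)_r\h{k}$, which integrates in closed form to $-\t{u}_1(r)\h{k}$ (an $H^2$ function vanishing at $\I$); the remaining terms are quadratic in $(\t{u},\t{u}_r,\t{e})$, hence lie in $L^1(r\,dr)$, and the Hardy-type bound $\|\int_r^\I f\,ds\|_{L^2_{rdr}[R,\I)}\le\|f\|_{L^1_{rdr}[R,\I)}$ (Lemma~\ref{X}) turns the resulting integral equation into a contraction on the unit ball of $L^2_{rdr}([R,\I))$ for $R$ large; one then solves the ODE inward from $r=R$. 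The same lemma rescues injectivity: from $W_r=A(q)W$ one gets $\|W\|_{L^2_{rdr}[R,\I)}\le C\|q\|_{L^2_{rdr}[R,\I)}\|W\|_{L^2_{rdr}[R,\I)}$, which forces $W\equiv0$ on $[R,\I)$ once $R$ is chosen so large that $C\|q\|_{L^2_{rdr}[R,\I)}<1$, after which ordinary ODE uniqueness propagates the vanishing inward. You need to incorporate this decomposition-plus-weighted-contraction step; without it the frame is not constructed. On the other hand, the difficulty you single out as the main obstacle---possible failure of the frame to have a limit at $r=0$---largely resolves itself: the same estimates show $\t{e}\in H^2(\R^2)\hookrightarrow C^0$, so $\h{e}$ is continuous down to the origin. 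Your sketches of the norm equivalences and of the curvature computation behind~\eqref{NLS1} are consistent with what the paper does.
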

This is proved in Section~\ref{relate}.
\begin{rem}
It is natural to consider $w(x,t) = e^{i \theta} q(r,t)$
to handle the $q/r^2$ term in~\eqref{NLS1} (see eg.~\eqref{NLS2}).
Notice regularity of $w$ implies decay of $q$ at $r=0$.
\end{rem}
Given this correspondence between equations~\eqref{SM1} 
and~\eqref{NLS1}, the main ingredient in proving 
Theorem~\ref{thm:SM} is an a priori estimate of
$\| e^{ i \th} q(t) \|_{H^1}$ for solutions
$q(r,t)$ of~\eqref{NLS1} with initial data $q_0(r) = q(r,0)$ with 
$e^{i \theta} q_0(x) \in H^1$ ($q_0 \in H^1$, $q_0/r \in L^2$).
\begin{rem}
We take $H^2$ initial data in Theorem~\ref{thm:SM}
so as to make the connection between equations~\eqref{SM1}
and~\eqref{NLS1} -- as expressed in Propositon~\ref{prop:q}
-- reasonably straightforward. Presumably, a more careful study 
of this connection could be used to lower the Sobolev
index. We do not pursue it here.
\end{rem}

So generalizing slightly, the heart of this paper is a study of 
the Cauchy problem for cubic, non-local, Schr\"odinger equations 
of this type, for radially-symmetric functions $q(r,t)$, 
in two space dimensions:
\EQ{  \label{NLS}
  \left\{
  \begin{split}
  &i q_t = -\Delta q + \frac{1}{r^2} q +
  \left( K \int_r^\infty |q(\rho,t)|^2 \frac{d \rho}{\rho}
  - \frac{\la}{2} |q|^2 \right) q \\
  & q(r,0) = q_0(r) \in L^2(\R^2),
  \end{split}  \right. }
where $\la \in \{0,\pm1 \}, \;\; K \in \R$.
This family of equations includes:
\begin{itemize}
\item
$\la = K = 0$: the free (linear) Schr\"odinger equation
for functions of angular momentum one:
$v(x,t) = e^{i \th} q(r,t)$
\item
$K = 0$: the focusing ($\la = 1$) or defocusing ($\la = -1$) cubic
Schr\"odinger equation (again, in the first angular momentum sector)
\item
$K = \la = 1$: equation~\eqref{NLS1} 
which, as discussed above, is satisfied by the derivative
$\M{u}_r$, as expressed in a particular frame, of a radial
Schr\"odinger map from $\R^2$ to $\S^2$
\item
$K = -1$, $\la = -1$: an analagous equation for (radial)
Schr\"odinger maps from $\R^2$ to 
hyperbolic space $\mathbb{H}^2$
\end{itemize}

Equation~\eqref{NLS} formally preserves the $L^2$-norm (or {\it mass}) 
of solutions,
\[
  \| q(t) \|_{L^2}^2 = \int_0^\I |q(r,t)|^2 \; r \;dr
  = \| q_0 \|_{L^2}^2,
\]
and moreover is invariant under the $L^2$-norm-preserving scaling
$q(r,t) \mapsto N q(N r, N^2 t)$ ($N > 0$),
making this an {\it $L^2$-critical} problem, and suggesting
that global well-posedness may be a delicate issue. 
Indeed, for the (local) cubic NLS ($K=0$), in the {\it focusing} case 
($\la = 1$), it is well-known that solutions at or above a critical 
mass threshhold may become singular in finite time, while 
solutions below this mass are global, as are {\it all} solutions 
in the {\it defocusing} ($\la = -1$) case: see \cite{W} for $H^1$ 
data, \cite{KTV} for $L^2$ data.   

A crucial difference between~\eqref{NLS} and its local ($K=0$)
counterpart, is that~\eqref{NLS} has no conserved energy.
A superficial consequence of this is a lack of 
obvious focusing/defocusing categorization for~\eqref{NLS}. 
However, a hint of its character can be seen in the following 
formal (i.e. assuming solutions are smooth, with fast spatial decay 
at the origin and infinity) identities for solutions:
\begin{itemize}
\item
{\it virial}-type identity:
\EQ{ \label{virial}
  \frac{d^2}{dt^2} \frac{1}{2} \int_0^\I r^2 |q(r,t)|^2 \; r \; dr
  = \int_0^\I \left\{ 4 |q_r|^2 + 4 \frac{|q|^2}{r^2}
  + \left( 2 K - \la \right) |q|^4
  \right \} \; r \; dr
}
\item
{\it Morawetz}-type identity:
\EQ{ \label{Morawetz}
  \frac{d^2}{dt^2} \int_0^\I r |q(r,t)|^2 \; r \; dr
  = \int_0^\I \left\{ 3 \frac{|q|^2}{r^3}
  + \left( 2K - \frac{\la}{2} \right) \frac{|q|^4}{r}
  \right \} \; r \; dr
} 
\end{itemize}
which suggest~\eqref{NLS} may have a defocusing character if,
for example,
\EQ{ \label{coeffs}
  2 K \geq \max \left( \la, \; \frac{\la}{2} \right). }
For this reason, we might expect to have global well-posedness
for~\eqref{NLS}, regardless of the size of the initial data, 
if~\eqref{coeffs} holds. This is our main result:
\begin{thm}
\label{thm:NLS}
If~\eqref{coeffs} holds, then for any (radial) $q_0 \in L^2$,
equation~\eqref{NLS} has a unique global solution,
which moreover scatters as $t \to \pm \I$. 
If in addition $w_0(x) = e^{i \th} q_0(r) \in H^k(\R^2)$
for $k=1$ or $2$, then with $w(x,t) = e^{i \th} q(r,t)$,
$\| w(t) \|_{H^k(\R^2)}$ remains finite for all $t \geq 0$.
\end{thm}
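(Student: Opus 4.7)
The plan is to apply the concentration-compactness/rigidity framework of Kenig-Merle and Killip-Tao-Visan to the $L^2$-critical equation~\eqref{NLS}. A key preliminary observation is that writing $w = e^{i\theta} q$ places us in the first angular-momentum sector of $L^2(\R^2)$, in which $-\Delta + 1/r^2$ acting on $q$ corresponds to the flat Laplacian $-\Delta$ acting on $w$; accordingly, the standard Strichartz and dispersive estimates for $e^{it\Delta}$ apply. The natural scattering norm in two space dimensions is $L^4_{t,x}(\R \times \R^2)$. A standard Strichartz-Duhamel contraction, with the nonlocal term $\int_r^\infty |q(\rho,t)|^2 \, d\rho/\rho$ controlled pointwise by $\|q/r\|_{L^2}^2$ via Hardy's inequality and the local cubic term $|q|^2 q$ handled in the usual way, yields local well-posedness, small-data scattering, and a stability (perturbation) lemma for approximate solutions. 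Global scattering will then be equivalent to global finiteness of $\|q\|_{L^4_{t,x}}$.

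Next, I would adapt a Merle-Vega / B\'egout-Vargas-type profile decomposition to the radial setting, respecting only the scaling and time-translation symmetries (there is no spatial translation invariance, due to both radiality and the location of the singularity at the origin; this is a genuine simplification over the general non-radial $L^2$-critical case). Assuming scattering fails for some initial mass, one defines the smallest critical mass $m^*$ and, via the profile decomposition and stability lemma, extracts a minimal-mass non-scattering solution $q^*$ whose orbit is precompact in $L^2$ modulo scaling. The Killip-Tao-Visan alternative then reduces $q^*$ to one of three almost-periodic scenarios (soliton-like, self-similar, or double high-to-low cascade) parametrized by a single scale function $N(t)$.

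For rigidity I would use the virial and Morawetz identities~\eqref{virial} and~\eqref{Morawetz}, both of which under the coefficient hypothesis~\eqref{coeffs} have manifestly nonnegative right-hand sides -- with the linear terms $|q_r|^2$, $|q|^2/r^2$, $|q|^2/r^3$ strictly positive for nontrivial $q$. Combined with the $L^2$-compactness of the orbit of $q^*$, a time integration of the Morawetz identity produces a spacetime bound for $\int |q^*|^2/r^3 \, r\, dr\, dt$ which in each of the three scenarios forces $q^* \equiv 0$, contradicting $m^* > 0$. Once global scattering is in hand, persistence of higher regularity for data with $w_0 \in H^k$ ($k=1,2$) follows by partitioning time into subintervals on which $\|q\|_{L^4_{t,x}}$ is small and running Strichartz estimates at the $H^k$ level, with Hardy and the radial Sobolev embedding handling both the local cubic and the nonlocal term.

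The main obstacle will be the self-similar scenario in the rigidity step, where $N(t) \to \infty$ and mass concentrates into a shrinking scale at the origin; bare Morawetz there does not yield enough decay. The standard remedy is to bootstrap $q^*$ into $H^s$ for some $s > 0$ through an iterated Duhamel/double-Strichartz argument, and this is the step most sensitive to the nonlocal structure of~\eqref{NLS}, since each iteration must retain control of $\int_r^\infty |q|^2 \, d\rho/\rho$ under progressively weaker assumptions. The reduced symmetry group -- just scaling and time translation -- confines the almost-periodic profile sharply enough near the origin that, once the regularity upgrade is in place, a localized virial argument combined with conservation of mass closes the contradiction.
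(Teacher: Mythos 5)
Your overall architecture coincides with the paper's: Strichartz/Hardy local theory in the first angular-momentum sector, reduction via concentration compactness to the three Killip--Tao--Visan almost-periodic scenarios, a regularity upgrade for the minimal blowup solution, and rigidity via virial/Morawetz monotonicity. Two points in your outline, however, are genuinely incomplete, and they sit exactly where the paper locates its main novelty.

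First, a small but real issue in the local theory: you propose to control the nonlocal potential ``pointwise by $\| q/r \|_{L^2}^2$ via Hardy,'' but $\| q/r \|_{L^2}$ is an $\dot H^1$-level quantity for $w=e^{i\theta}q$ and is not available for $L^2$ data. What is needed instead is the $L^p$-boundedness of the averaging operator $f \mapsto \int_r^\infty f\, d\rho/\rho$, which lets you place the potential in $L^2_x$ via $\|w\|_{L^4_x}^2$ and close the contraction in the dual Strichartz space $L^{4/3}_{x,t}$, exactly parallel to the local cubic term. Second, and more seriously: your claim that a time integration of the Morawetz identity ``in each of the three scenarios forces $q^*\equiv 0$'' does not survive scrutiny, and your later caveat about the self-similar case does not fully repair it. Because there is no conserved energy, the Morawetz and virial actions are controlled only by $\|q\|_{L^2}\|q_r\|_{L^2}$, so their boundedness (or vanishing at temporal endpoints) must be extracted from the uniform $H^s$ regularity of the minimal blowup solution together with the compactness-derived two-sided bound $\|q_r(t)\|_{L^2}\sim N(t)$. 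In the self-similar case the scaling is exactly borderline for Morawetz ($P(q(t))=O(t^{-1/2})$ against a spacetime integral of the same size), and the working argument is the localized virial with cutoff radius $R\sim C(\eta)/N(T)$: the functional $I_R$ stays $O(1)$ precisely because $R\,\|q_r\|_{L^2}=O(1)$, while its derivative is $\gtrsim N^2(t)=1/t$ and integrates to $\log T$. In the inverse-cascade case one must also construct a \emph{bounded} Morawetz weight $\psi$ for which every term in $\frac{d}{dt}P$ is positive (the naive weight $|x|$ produces a negative bi-Laplacian contribution in 2D, rescued only by the angular term $|q|^2/r^3$), and then use $\|q_r(t_n)\|_{L^2}\to 0$ along sequences with $N(t_n)\to 0$ to force $P\to 0$ at both ends. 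These quantitative substitutes for energy conservation are the content you would need to supply to make the rigidity step rigorous.
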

This result may be of some wider interest, but 
in particular, the relation~\eqref{coeffs} indeed holds
in the case $K = \la = 1$ of~\eqref{NLS1}, 
and so Theorem~\ref{thm:NLS} -- in light of 
Proposition~\ref{prop:q} -- provides
the estimates needed for the application to radial Schr\"odinger
maps into $\S^2$, and so proves Theorem~\ref{thm:SM}.
\begin{rem}
For Schr\"odinger maps into hyperbolic space
$\mathbb{H}^2$, relation~\eqref{coeffs} does {\it not} hold,
and global well-posedness is open. 
\end{rem} 

In the absence of a conserved energy to control the $H^1$ norm
(were we to take $H^1$ data), 
we approach the well-posedness of~\eqref{NLS} in the celebrated
framework for critical equations recently pioneered by 
Kenig-Merle \cite{KM}, though naturally we follow most closely the 
work of Killip-Tao-Visan \cite{KTV} on the 2D radial cubic 
(local) NLS. 

Thus, we begin with the local theory:
\begin{prop} \label{prop:local}  
\begin{enumerate}
\item
For each $q_0 \in L^2$, \eqref{NLS} has a unique solution
$q \in C(I;L^2) \cap L^4_{loc}(I;L^4)$ on a maximal (and non-empty)
time interval $I = [T_{min},\; T_{max}] \ni 0$ 
(possibly $T_{min} = -\I$ and/or $T_{max} = \I$),
which conserves the $L^2$ norm.
\item
If $T_{max} < \I$, then $\| q \|_{L^4_t([0,T_{max}];L^4)} = \I$
(an analagous statement holds for $T_{min}$). 
\item
If $T_{max} = \I$ and $\| q \|_{L^4_t([0,\I);L^4)} < \I$,
then $q$ scatters as $t \to +\I$
(an analagous statement holds for $t \to -\I$).
\item
The solution at each time depends continuously on the initial
data. Further, the solution has the ``stability'' property
as in Lemma 1.5 of \cite{KTV}.
\item
If $\| q_0 \|_{L^2}$ is sufficiently small, the solution
is global ($I = (-\I, \; \I)$) and
$\| q \|_{L^4_t(\R;L^4)} < \I$.  
\end{enumerate}
\end{prop}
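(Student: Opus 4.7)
The plan is to follow the standard Strichartz-based template for $L^2$-critical NLS, adapted to accommodate the nonlocal structure of~\eqref{NLS}, following the approach of Cazenave--Weissler (cf.~\cite{KTV}). The first step is to substitute $w(x,t) = e^{i\theta} q(r,t)$, which absorbs the $q/r^2$ term into the full $2$D Laplacian and recasts~\eqref{NLS} as the NLS
\[
  i w_t \;=\; -\Delta w + N(w), \quad
  N(w) \;=\; \bigl( g[w] - \tfrac{\la}{2}|w|^2\bigr)\,w,
  \quad g[w](r,t) \;:=\; K \int_r^\infty |q|^2 \tfrac{d\rho}{\rho},
\]
on all of $\R^2$. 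All subsequent work then takes place in the Strichartz space $X(I) = C(I;L^2(\R^2)) \cap L^4(I;L^4(\R^2))$ built from the $2$D admissible pair $(4,4)$.

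The heart of the argument is the nonlinear bound
\[
  \|N(w)\|_{L^{4/3}_{t,x}(I\times\R^2)} \;\lesssim\; \|w\|_{L^4_{t,x}(I\times\R^2)}^3,
\]
together with its analogue for differences. The local piece $|w|^2 w$ is controlled by H\"older, exactly as in~\cite{KTV}. For the nonlocal piece $g[w]\,w$, Cauchy--Schwarz in the $\rho$-integral yields the pointwise bound $|g[w](r,t)| \lesssim \|w(t)\|_{L^4_x}^2 / r$; the singular weight $1/r$ is then absorbed into $w$ using the angular-momentum-one structure $w = e^{i\theta} q$ (equivalently, $q$ vanishing at the origin), via a Hardy-type weighted estimate that keeps the right-hand side in terms of $\|w\|_{L^4_{t,x}}$. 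This closes a contraction for the Duhamel operator $\Phi w = e^{it\Delta}w_0 - i\int_0^t e^{i(t-s)\Delta} N(w)\,ds$ on short time slabs, yielding part~(1): local existence and uniqueness, with $L^2$ conservation obtained first for smooth approximating data and then extended to all of $L^2$ by density and continuous dependence.

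Parts~(2)--(5) follow from this same nonlinear estimate by the standard arguments. Since the contraction radius depends only on $\|w\|_{L^4_{t,x}}$, the blow-up criterion~(2) is the usual ``extend past $T_{\max}$ if the $L^4_{t,x}$ norm is finite'' argument. For~(3), a finite $\|q\|_{L^4_t L^4_x([0,\infty))}$ together with the nonlinear estimate makes the Duhamel integral $\int_0^\infty e^{-is\Delta} N(w)\,ds$ converge in $L^2$, producing a scattering state. Continuous dependence and the stability/perturbation statement~(4) are Strichartz-estimate consequences of the same nonlinear bound applied to a perturbed equation, and follow~\cite[Lemma~1.5]{KTV} essentially verbatim. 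For~(5), small-data Strichartz gives $\|e^{it\Delta}w_0\|_{L^4_{t,x}(\R\times\R^2)} \lesssim \|q_0\|_{L^2}$, so for small enough $\|q_0\|_{L^2}$ the contraction closes on all of $\R$. The main obstacle throughout is the nonlocal term $g[w]\,w$, absent from~\cite{KTV}: its pointwise $1/r$-singularity must be reconciled with $L^2$ regularity, and it is precisely the angular-momentum-one factor $e^{i\theta}$ in the ansatz for $w$ that makes the requisite weighted/Hardy-type inequality available at this regularity threshold.
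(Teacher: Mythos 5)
Your overall architecture is exactly the paper's: pass to $w=e^{i\theta}q$ so that $-\Delta+r^{-2}$ becomes the true Laplacian, run the Cazenave--Weissler contraction on the $(4,4)$ Strichartz pair, and reduce parts (1)--(5) to a trilinear bound $\|N(w)\|_{L^{4/3}_{t,x}}\lesssim\|w\|_{L^4_{t,x}}^3$ (plus its difference form). However, your proof of that key bound for the nonlocal piece has a genuine gap. You propose the pointwise Cauchy--Schwarz bound $|g[w](r)|\lesssim \|w\|_{L^4_x}^2/r$ followed by a ``Hardy-type weighted estimate'' absorbing the $1/r$ into $w$, i.e.\ $\|w/r\|_{L^{4/3}(\R^2)}\lesssim\|w\|_{L^4(\R^2)}$. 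That inequality is \emph{false}, even for $w=e^{i\theta}q(r)$ with $q$ supported away from the origin: taking $q(r)=r^{-1/2}$ on $[\e,1]$ (and $0$ elsewhere) gives $\|w\|_{L^4}\sim(\log\tfrac1\e)^{1/4}$ while $\|w/r\|_{L^{4/3}}\sim(\log\tfrac1\e)^{3/4}$, so the ratio diverges. The scaling matches but the inequality sits at a false endpoint of H\"older in Lorentz spaces ($1/r\in L^{2,\infty}$ only, and $L^{4}\not\subset L^{4,4/3}$), and the angular-momentum-one factor is irrelevant here --- its only job is to produce the standard propagator so Strichartz applies. Moreover the same example shows the pointwise Cauchy--Schwarz step is itself lossy by $(\log\tfrac1\e)^{1/2}$, so no true weighted inequality can close your chain.

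The repair (which is what the paper does) is to keep the nonlocal integral as an operator rather than bounding it pointwise: apply H\"older first,
\begin{equation*}
\Big\| w\int_{|y|\ge|x|}\frac{|w(y)|^2}{|y|^2}\,dy\Big\|_{L^{4/3}_x}
\le \|w\|_{L^4_x}\,\Big\|\int_r^\infty |q(\rho)|^2\,\frac{d\rho}{\rho}\Big\|_{L^2},
\end{equation*}
and then bound the second factor by $\||q|^2\|_{L^2}=\|w\|_{L^4_x}^2$ using the $L^p(r\,dr)$-boundedness ($1\le p<\infty$) of $f\mapsto\int_r^\infty f\,d\rho/\rho$, which is the Hardy inequality~\eqref{hardy} applied to $I=\int_r^\infty f\,d\rho/\rho$ via $rI_r=-f$; this is precisely~\eqref{tri} with $(p,p_1,p_2,p_3)=(4/3,4,4,4)$. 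With this substitution your treatment of parts (1)--(5) goes through as described and coincides with the paper's proof.
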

The proof is a mild variant of the proof in the local case
\cite{C,TVZ}, as explained in Section~\ref{local}.

In particular, there is global well-posedness for {\it small}
$L^2$ data, and the approach of Kenig-Merle is to study
a hypothetical solution that ``blows up'' in the sense
that its space-time $L^4$ norm over its interval of existence
is infinite,
\EQ{  \label{blowup}
  \| q \|_{L^4(I;L^4)} = \I  }
(a definition of ``blowup'' which includes merely non-scattering
solutions, as well as those which fail to exist globally),
and which does so with minimal $L^2$ norm (or {\it mass}). 
Such a solution is then shown to have strong compactness 
(and in \cite{KTV}, smoothness) properties. Our version is
as follows:  
\begin{prop} \label{minimal}
If there is any $L^2$ data for which global well-posedness 
(or merely scattering) for~\eqref{NLS} fails,
then there is a solution $q(r,t)$, defined on maximal 
existence interval $I$, with minimal $L^2$-mass among
solutions blowing up as in~\eqref{blowup}, such that:
\begin{enumerate}
\item
for $t \in I$, there is $N(t) \in (0,\I)$ so that
\[
  v(r,t) := N(t)^{-1} q (r/N(t),t) 
\]
is an $L^2$ pre-compact family (in $t$)
\item
we may assume $q$ falls into one of the following three cases
\begin{itemize}
\item soliton-type solution: $I = \R$ and $N(t) \equiv 1$
\item self-similar-type solution: $I = (0,\I)$ and $N(t) =t^{-1/2}$
\item inverse cascade-type solution: $I = \R$, $N(t) \lec 1$,
$\liminf_{t \to -\I} N(t) = \liminf_{t \to \I} N(t) = 0$
\end{itemize}
\item
$w(x,t) := e^{i \theta} q(r,t) \in H^s(\R^2)$ for every 
$s \geq 0$ and $t \in I$,
and furthermore in the soliton and inverse cascade cases,
$w \in L^\I_t H^s(\R^2)$ for each $s \geq 0$.
\end{enumerate}
\end{prop}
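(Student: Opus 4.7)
My approach is the Kenig--Merle concentration-compactness/rigidity scheme, following closely the Killip--Tao--Visan treatment of the 2D radial mass-critical NLS, with the non-local term in \eqref{NLS} requiring some adaptation. Assuming the conclusion of Theorem~\ref{thm:NLS} fails, set
\[
  m_* := \inf \{ \| q_0 \|_{L^2}^2 : q_0 \in L^2_{\text{rad}}(\R^2),\ \text{the solution of \eqref{NLS} blows up as in \eqref{blowup}} \},
\]
which is strictly positive by Proposition~\ref{prop:local}(5). Pick a minimizing sequence of blowup solutions $q_n$ with $\| q_{0,n} \|_{L^2}^2 \to m_*$. In the radial $L^2$ setting on $\R^2$ the only residual noncompact symmetries are scaling and time translation, so the appropriate linear profile decomposition writes
\[
  q_{0,n} = \sum_{j=1}^J g_n^j \phi^j + w_n^J,
\]
with asymptotically orthogonal scale/time parameters $(N_n^j, t_n^j)$, vanishing Strichartz norm of $w_n^J$ as $J, n \to \I$, and mass orthogonality. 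Coupled with the stability statement of Proposition~\ref{prop:local}(4), a standard argument shows exactly one profile can be nonlinearly ``active'': any other would have strictly subcritical mass, hence a globally scattering nonlinear profile by minimality of $m_*$, and stability would then propagate scattering to the full $q_n$, contradicting blow-up. Passing to the limit in the surviving profile yields a minimal-mass blowup solution $q$ on its maximal interval $I$.

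Applying the same ``one surviving profile'' argument to rescaled snapshots of $q$ along any sequence of times in $I$ produces a scaling function $N : I \to (0, \I)$ for which the orbit
\[
  \{ N(t)^{-1} q(\cdot/N(t), t) : t \in I \}
\]
is $L^2$-precompact, yielding part~(1). Part~(2) is then the standard Killip--Tao--Visan / Tao--Visan--Zhang dichotomy: using the scale and time-translation symmetries of \eqref{NLS} to normalize, $N(t)$ must exhibit one of the three behaviors listed (soliton, self-similar, inverse cascade). The non-local term enjoys the same $L^2$-critical scaling as the local cubic piece, so this classification goes through unchanged.

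The main obstacle is part~(3), the regularity of $w = e^{i \theta} q$, since \eqref{NLS} lacks a conserved energy to bootstrap. I plan to adapt the KTV argument: use the precompactness from~(1) together with an in/out Strichartz decomposition and a double Duhamel estimate to first establish negative regularity $w \in H^{-\e}(\R^2)$, and then climb the Sobolev ladder to $H^s$ for every $s \geq 0$. The principal new ingredient is controlling the non-local potential
\[
  V_q(r,t) := K \int_r^\I |q(\rho,t)|^2 \frac{d \rho}{\rho};
\]
for radial $q \in L^2$ this is a real, radial multiplier with the same scaling homogeneity as $|q|^2$ and is pointwise dominated by a tail of the mass, so after frequency localization it fits into the same bilinear $L^4_{t,x}$/Strichartz machinery that handles the local cubic nonlinearity. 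In the soliton and inverse cascade cases $N(t) \lec 1$, so the resulting bounds are uniform in $t$ and give $w \in L^\I_t H^s$; in the self-similar case, regularity holds at each fixed $t \in (0, \I)$ but cannot be uniform, as the profile concentrates at infinite frequency as $t \to 0^+$.
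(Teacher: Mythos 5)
Your overall architecture --- minimal mass extracted via a profile decomposition whose only residual symmetries are scaling and time translation, a single surviving profile by the stability statement of Proposition~\ref{prop:local}(4), almost-periodicity modulo scaling, the three-scenario reduction for $N(t)$, and regularity via the in/out decomposition and double Duhamel --- is exactly the route the paper takes: it imports Sections 4--7 of \cite{KTV} nearly verbatim, handling the non-local term in most nonlinear estimates with the Hardy--H\"older bound \eqref{tri}, and replacing $J_0, H_0$ by $J_1, H_1$ in the in/out projection kernels because $w=e^{i\theta}q$ has angular momentum one rather than being radial (a point worth flagging explicitly, since Shao's estimate and the weighted Strichartz estimate of \cite{KTV} are stated for radial functions).

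The one genuine gap in your sketch is the sentence asserting that, after frequency localization, the non-local potential ``fits into the same bilinear $L^4_{t,x}$/Strichartz machinery that handles the local cubic nonlinearity.'' In the self-similar regularity step (the adaptation of \cite[Lemma 5.3]{KTV}), after the high/medium/low decomposition \eqref{decomp} one must estimate the term
\[
  w_{>(A/8)T^{-1/2}}\; I\bigl(|w_{\leq \sqrt{A}\,T^{-1/2}}|^{2}\bigr),
  \qquad I(f)(r):=\int_r^\infty f(\rho)\,\frac{d\rho}{\rho},
\]
in which both low-frequency factors sit \emph{inside} the non-local integral and the high-frequency factor sits outside. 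This is not a pointwise product of a low-frequency and a high-frequency approximate solution of the free Schr\"odinger equation, so the bilinear Strichartz estimate does not apply to it, and your argument would stall exactly here. The paper's fix is twofold: first verify that $I$ preserves frequency localization (via $\hat I(|\xi|)=|\xi|^{-2}\int_{|\xi|}^\infty \hat f(|\eta|)\,|\eta|\,d|\eta|$), so the decomposition of the nonlinearity still forces a high-frequency factor to appear somewhere; then replace bilinear Strichartz for this troublesome term by Shao's linear Strichartz estimate \eqref{Shao} (for angular-momentum-one functions) combined with Bernstein and Hardy, which produces a gain $(M/N)^{4/q-1}$ and hence a decay factor $A^{-(2/q-1/2)}$ in place of the $A^{-1/4}$ of \cite{KTV}; this weaker power is still summable over dyadic scales and closes the bootstrap. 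You need some such substitute --- or an argument that the bilinear estimate extends to this non-local configuration, which is not at all clear --- before part (3) in the self-similar case is established.
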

The proof of Proposition~\ref{minimal} follows the corresponding
proof in~\cite{KTV} very closely. Because of the non-local 
nonlinearity, the estimates have to be done differently in a few 
key places -- we give those details in Section~\ref{enemy}. 

After extracting a minimal blowup solution with nice properties,
the Kenig-Merle strategy for proving global well-posedness
(and scattering) is to rule out the possibility of the 
existence of such an object. This is the ``equation specific''
part of the program. 
In our case, the proof relies on modified versions of 
the identities~\eqref{virial} and~\eqref{Morawetz},
and so is in a sense similar to
\cite{KM,KTV}. However, there is a fundamental difference here --
we have no conserved energy (which is typically what appears
on the the right-hand-side of an identity like~\eqref{virial}), 
and so we need finer estimates to get the contradiction. 
This is the main novelty of the paper, and is done in 
Section~\ref{rule}.

\section{Non-existence of blowup solutions for the non-local NLS}
\label{rule}

In this Section we prove Theorem~\ref{thm:NLS}, modulo
Proposition~\ref{enemy}, by ruling out
the possibility of soliton-, self-similar-,
and inverse-cascade-type blowup solutions. 

So let $q(r,t)$ be a minimal mass blowup solution
on maximal existence interval $I$, with frequency scale function 
$N(t)$, furnished by Proposition~\ref{minimal}, and set 
\[
  w(x,t) = e^{i \theta} q(r,t).
\]
And recall our standing assumption
\[
  2 K \geq \max \left( \la, \; \frac{\la}{2} \right).
\]

We will use a lower bound which follows easily from the 
compactness:
\begin{lem} \label{lower}
\[
  \| \nabla w (\cdot,t) \|_{L^2(\R^2)}^2 \sim 
  \| q_r(\cdot,t) \|_{L^2}^2 + \| q(\cdot,t)/r \|_{L^2}^2
  \gec N^2(t).
\]
\end{lem}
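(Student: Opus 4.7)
The lemma has two parts: an equivalence of norms and a uniform lower bound in terms of $N(t)$.

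For the equivalence, a direct calculation in polar coordinates gives $|\nabla w|^2 = |q_r|^2 + r^{-2}|q|^2$ pointwise, since $w = e^{i\theta}q(r)$ has $\partial_r w = e^{i\theta} q_r$ and $r^{-1}\partial_\theta w = i\,r^{-1} e^{i\theta} q$. Integrating over $\R^2$ produces an actual equality (not merely an equivalence) of $\|\nabla w\|_{L^2}^2$ with $\|q_r\|_{L^2}^2 + \|q/r\|_{L^2}^2$, handling the first half of the lemma.

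For the lower bound, the plan is to reduce to a statement about the pre-compact profile $v$. A straightforward change of variable $s = N(t)r$ gives the scaling identity $\|q(\cdot,t)/r\|_{L^2}^2 = N(t)^2 \|v(\cdot,t)/r\|_{L^2}^2$, so it suffices to show $\|v(\cdot,t)/r\|_{L^2}^2 \gtrsim 1$ uniformly in $t \in I$. Since the $L^2$ norm is scale-invariant and conserved, $\|v(\cdot,t)\|_{L^2} = \|q_0\|_{L^2} = M$, with $M>0$ because any blowup solution has positive mass. By Proposition~\ref{minimal} the family $\{v(\cdot,t)\}_{t \in I}$ is pre-compact in $L^2(\R^2)$, hence uniformly tight: there exists $R > 0$ (independent of $t$) such that at least half of the mass of $v$ is concentrated in the disc of radius $R$. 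The crude pointwise estimate $r^{-2} \geq R^{-2}$ on this disc then forces $\|v(\cdot,t)/r\|_{L^2}^2 \gtrsim M^2/R^2$ uniformly in $t$, and undoing the rescaling produces the lemma.

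There is no substantive obstacle. The only conceptual step is the passage from $L^2$ pre-compactness to uniform spatial tightness, a standard consequence of the characterization of compact subsets of $L^2$. It is worth emphasizing that the angular modulation $e^{i\theta}$ is what lets $\|q/r\|_{L^2}$ appear inside $\|\nabla w\|_{L^2}$; this is essential, since in two dimensions there is no hope of bounding $\|q/r\|_{L^2}$ below in terms of $\|q_r\|_{L^2}$ and an $L^2$-mass alone.
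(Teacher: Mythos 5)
Your proof is correct, but the lower-bound argument is genuinely different from the paper's. The paper also reduces to showing $\|\nabla\tilde w(\cdot,t)\|_{L^2}\gtrsim 1$ for the rescaled profile, but then argues by contradiction: if $\|\nabla\tilde w_n\|_{L^2}\to 0$ along a sequence, then $\tilde w_n$ is bounded in $H^1$ and converges weakly to a constant, hence to $0$, so by Rellich it converges to $0$ strongly in $L^2$ on disks; combined with the uniform tightness from pre-compactness this contradicts conservation of (nonzero) mass. You instead discard the $\|q_r\|_{L^2}$ term entirely and bound $\|v/r\|_{L^2}^2$ below directly, using only tightness (there is $R$ with $\int_{|x|\le R}|v|^2\ge M^2/2$ uniformly in $t$) together with the trivial pointwise bound $r^{-2}\ge R^{-2}$ on that disk. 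Your route is more elementary — it avoids weak compactness and Rellich altogether — and it yields an explicit constant $M^2/(2R^2)$; it also makes transparent why the angular factor $e^{i\theta}$ matters, since the bound lives entirely in the $\|q/r\|_{L^2}$ term that this factor contributes to $\|\nabla w\|_{L^2}$. The one step you should make sure to justify cleanly is the passage from $L^2$ pre-compactness to uniform tightness (a finite $\eta$-net plus the decay of each net element), and the observation that $M=\|q_0\|_{L^2}>0$ because small-mass solutions are global and scatter by Proposition~\ref{prop:local}. The equivalence $\|\nabla w\|_{L^2}^2=\|q_r\|_{L^2}^2+\|q/r\|_{L^2}^2$ is, as you note, an exact identity in polar coordinates, which the paper leaves implicit.
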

\begin{proof}
First rescale $q(r,t) = N(t) v( N(t) r, t)$, 
and set $\tilde{w}(x,t) = e^{i \theta} v(r,t)$,
so that the estimate we seek is 
$\| \nabla \tilde{w} (\cdot, t) \|_{L^2} \gec 1$.
If this fails, then  for some sequence $\{ t_n \}$,
$\tilde{w}_n(x) := \tilde{w}(x,t_n)$, 
satisfies $\| \nabla \tilde{w}_n \|_{L^2(\R^2)} \to 0$. 
Since $\| \tilde{w}_n \|_{L^2} = const.$, we can extract a 
subsequence (still denoted $\tilde{w}_n$) with $\tilde{w}_n \to 0$ 
weakly in $H^1$, and strongly in $L^2$ on disks. 
By the compactness, on the other hand, for any $0 < \eta$, 
$\| \tilde{w}_n \|_{L^2(\{ |x| > C(\eta) \})} < \eta$, 
a contradiction.  
\end{proof}

\subsection{The soliton case}
\label{soliton}

Here $I = \R$ and $N(t) \equiv 1$.

The main tool is a spatially localized version 
of the virial identity~\eqref{virial}. 
For a smooth cut-off function
\[
  \psi(r) \geq 0, \;\;\; \psi \equiv 1 \mbox{ on }
  [0, 1), \;\;\; \psi \equiv 0 \mbox{ on } [2,\I),
\]
and a fixed radius $R > 0$, define 
$\phi_R(r) := \psi(r/R)$, and the quantity
\[
  I_R( q ) := \int_0^\I r Im(\bar{q} q_r) \phi_R \; r \; dr,
\]
a function of time. By straightforward calculation we have 
\begin{lem}
\begin{equation} \label{locvir}
\begin{split}
  \frac{d}{dt} I_R(q) = 2 \int_0^\I & \left\{
  |q_r|^2 + \frac{|q|^2}{r^2} + \mu |q|^4   \right. \\
  & \quad + \left( |q_r|^2 + \frac{|q|^2}{r^2} + \mu |q|^4 \right)
  \left( \phi_R - 1 \right) \\
  & \quad + \left( |q_r|^2 - \frac{3}{4}\frac{|q|^2}{r^2} 
  - \frac{\mu}{2} |q|^4 \right) r (\phi_R)_r \\
  & \quad \left. - \frac{5}{4} \frac{|q|^2}{r^2} r^2 (\phi_R)_{rr}
  - \mu \frac{|q|^2}{r^2} r^3 (\phi_R)_{rrr} \;
  \right\} \; r \; dr.
\end{split}  
\end{equation}
\end{lem}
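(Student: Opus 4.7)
The plan is to compute $\frac{d}{dt} I_R$ directly by substituting the equation~\eqref{NLS} for $q_t$ and $\bar q_t$ and integrating by parts in $r$ so that every derivative falls off $\phi_R$ onto $q$ and $\bar q$. Writing $iq_t = Hq$ with the real operator $Hq = -\Delta q + r^{-2}q + Fq$ (so that $\bar q_t = iH\bar q$), the integrand of $\dot I_R$ reduces to $r^2\phi_R\,\Re[(H\bar q)q_r - \bar q(Hq)_r]$. A useful simplification appears at this stage: the two $F\bar q q_r$ contributions cancel between the pieces, so the entire nonlinear contribution to $\dot I_R$ packages into the single expression $-\int_0^\infty r^2\phi_R F_r|q|^2\,dr$.

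I would then handle the linear and nonlinear contributions separately. The linear part $-\Delta q + q/r^2$ produces the expressions $\Re(\bar q q_{rrr} - \bar q_{rr}q_r)$, $[\Re(\bar q q_{rr}) - |q_r|^2]/r$, $-\Re(\bar q q_r)/r^2$, and $2|q|^2/r^3$. Via repeated integration by parts---using $\Re(\bar q q_{rr}) = \partial_r\Re(\bar q q_r) - |q_r|^2$ and $\Re(\bar q q_r) = \tfrac{1}{2}\partial_r|q|^2$---all derivatives move onto $r^2\phi_R$, producing the main $|q_r|^2$ and $|q|^2/r^2$ terms together with corrections at weights $r(\phi_R)_r$, $r^2(\phi_R)_{rr}$, and $r^3(\phi_R)_{rrr}$ (the last arises because reducing $\Re(\bar q q_{rrr})$ all the way down to $|q|^2$ requires a triple integration by parts); the specific coefficients $-\tfrac{3}{4}$ and $-\tfrac{5}{4}$ on the $|q|^2/r^2$ corrections drop out of this bookkeeping.

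The main obstacle is the non-local nonlinearity. The key identity is
\[
  \partial_r \int_r^\infty |q(\rho,t)|^2\,\frac{d\rho}{\rho} = -\frac{|q(r,t)|^2}{r},
\]
which collapses the non-local part of $F_r$ into a local $|q|^2/r$ expression, so that the non-local contribution to $-\int r^2\phi_R F_r|q|^2\,dr$ becomes simply $K\int r\phi_R|q|^4\,dr$---with no leftover double integral. The local part $-\tfrac{\la}{2}|q|^2$ of $F$ contributes, via $|q|^2\partial_r|q|^2 = \tfrac{1}{2}\partial_r|q|^4$ and one integration by parts against $r^2\phi_R$, a main $|q|^4$ piece and an $r(\phi_R)_r |q|^4$ correction. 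The two combine to give the $\mu|q|^4$ main term and the $-\tfrac{\mu}{2}r(\phi_R)_r|q|^4$ correction, with $\mu$ the relevant combination of $K$ and $\la$. Finally, writing $\phi_R = 1 + (\phi_R - 1)$ separates the ``unlocalized'' virial expression---which consistency-checks against~\eqref{virial} upon setting $\phi_R \equiv 1$---from the localization errors, yielding the form in~\eqref{locvir}. The novelty over the local cubic case is that the non-local structure of $F$ is perfectly compatible with the localized virial method, collapsing via the fundamental-theorem identity rather than producing unwanted double integrals.
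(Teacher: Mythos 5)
Your computational strategy is exactly the ``straightforward calculation'' the paper invokes without writing out: differentiate $I_R$, substitute the equation to get $\partial_t\mathrm{Im}(\bar q q_r)=\mathrm{Re}[(H\bar q)q_r-\bar q(Hq)_r]$, observe that the nonlinear contribution collapses to $-\int_0^\infty r^2\phi_R F_r|q|^2\,dr$, use $\partial_r\int_r^\infty|q|^2\,d\rho/\rho=-|q|^2/r$ to localize the non-local piece, and move all remaining derivatives onto $r^2\phi_R$. All of the structural claims in your outline are correct, and the main ($\phi_R$-weighted) terms come out with the stated coefficients, consistent with~\eqref{virial}.

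However, you assert rather than verify the coefficients of the cutoff-derivative corrections, and if you finish the bookkeeping you will find that two of them do \emph{not} match the displayed formula for general $(K,\lambda)$. The $r^3(\phi_R)_{rrr}$ term arises, as you say, purely from integrating $\mathrm{Re}(\bar q q_{rrr})$ by parts against $r^2\phi_R$ three times (via $-\tfrac12\int(r^2\phi_R)_{rrr}|q|^2\,dr$), so its coefficient must be a pure number: it is $-\tfrac14$ inside the braces, not $-\mu$; a term coming from the linear part cannot carry a $K$- or $\lambda$-dependent coefficient. Likewise, the non-local nonlinearity contributes only $K\int\phi_R|q|^4\,r\,dr$, with \emph{no} $(\phi_R)_r$ correction (there is nothing left to integrate by parts), so the entire $|q|^4\,r(\phi_R)_r$ term comes from the local cubic piece and its coefficient is $-\tfrac{\lambda}{8}$ inside the braces, not $-\tfrac{\mu}{2}$. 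Both discrepancies vanish exactly when $K=\lambda=1$ (so $\mu=\tfrac14$), the case relevant to Schr\"odinger maps, and they are immaterial to the argument, since every $(\phi_R)$-derivative term is treated as an $O(1)$-coefficient error supported in $R\le r\le 2R$. Still, your claim that the described computation ``combines to give \dots the $-\tfrac{\mu}{2}r(\phi_R)_r|q|^4$ correction'' is not what the computation produces; you should either carry out the bookkeeping and state the corrected coefficients, or note explicitly that only the $\phi_R$-weighted main terms and the uniform boundedness of the remaining weights are needed.
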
  
with $\mu := \frac{1}{4}(2K - \la) \geq 0$.

From Proposition~\ref{minimal} we have for each $s \geq 0$, and for all $t$,
\EQ{  \label{Sob}
  \| w( \cdot, t ) \|_{\dot H^s(\R^2)} \leq C_s.  }
Fix $\eta > 0$, and let $R = 2C(\eta)$ so that, since 
$N(t) \equiv 1$,
\EQ{  \label{small}
  \int_{|x|>R/2} |w(x,t)|^2 dx < \eta }
for all $t$. Multiplying $w$ by a cut-off function 
$1-\psi(2r/R)$, and interpolating between~\eqref{small} 
and~\eqref{Sob} with $s=2$ (and using a Sobolev inequality)
yields
\[
  \int_R^\I \left\{ |q_r|^2 + \frac{|q|^2}{r^2} + \mu |q|^4 \right\} r
  dr \sim \int_{|x| \geq R} \left\{ |\nabla w|^2 + \mu |w|^4 \right\}
  \; dx \lec \eta^{1/2},
\]
and so using $|1-\phi_R|, \; |r (\phi_R)_r|, \; |r^2 (\phi_R)_{rr}|,
\; |r^3 (\phi_R)_{rrr}| \lec 1$ in~\eqref{locvir}, we arrive at
\[
  \frac{d}{dt} I_R(q) \geq 2 \int_0^\I \left\{
  |q_r|^2 + \frac{|q|^2}{r^2} + \mu |q|^4 \right\} \; r \; dr
  - C \eta^{1/2}.
\]
By Lemma~\ref{lower} then, since $N(t) \equiv 1$,
and for $\eta$ chosen small enough,
\[
  \frac{d}{dt} I_R(q) \gec 1.
\]
On the other hand,
\[
  |I_R(q)| \lec R \| q \|_{L^2} \| q_r \|_{L^2} \lec R C_1.
\]
These last two inequalities are in contradiction for 
sufficiently large $t$, and so the soliton-type blowup
is ruled out.

\subsection{The self-similar case} 
\label{selfcase}

Here $I = (0,\I)$, and $N(t) = t^{-1/2}$.

Again we use~\eqref{locvir}, but in this case, we need a stronger 
bound on the Sobolev norms -- in fact, bounds which match
Lemma~\ref{lower}. Such bounds follow from the
regularity estimate of~\cite{KTV}, in the self-similar case, 
as adapted to our non-local nonlinearity in 
Section~\ref{self-reg}:
\begin{lem}
\label{self}
For any $s \geq 0$,
\EQ{  \label{reg}
  \sup_{t \in (0,\infty)} \int_{|\xi| > A t^{-{1/2}}} | \hat{w}(\xi,t) |^2
  d \xi \leq C_s A^{-s}, \quad\quad A > A_0(s).  }
\end{lem}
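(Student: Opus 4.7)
The plan is to adapt the Killip-Tao-Visan regularity argument (\cite{KTV}, Section~4) for self-similar blowup solutions to the non-local nonlinearity in~\eqref{NLS}. Since the two-dimensional Laplacian absorbs the $q/r^2$ term when acting on first-angular-momentum functions, $w=e^{i\th}q$ satisfies
\[
iw_t=-\Delta w+\mathcal{N}[w],\quad \mathcal{N}[w]=\Bigl(KV[w]-\tfrac{\la}{2}|w|^2\Bigr)w,\quad V[w](x):=\frac{1}{2\pi}\int_{|y|>|x|}\frac{|w(y)|^2}{|y|^2}\,dy,
\]
so no singular potential remains in the $w$-equation. The $L^2$ pre-compactness of $\{N(t)^{-1}w(\cdot/N(t),t)\}_{t\in I}$ translates, via Plancherel, into the uniform high-frequency tail bound
\[
\int_{|\xi|>C(\eta)N(t)}|\hat w(\xi,t)|^2\,d\xi<\eta,\qquad t\in I,
\]
which will be the seed of the iteration.

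Because $N(t)=t^{-1/2}\to 0$ as $t\to\I$, this forces $\|P_{>M}w(T)\|_{L^2}\to 0$ as $T\to\I$ for every fixed frequency $M>0$, so the boundary contribution in the forward Duhamel formula vanishes weakly and
\[
P_{>M}w(t)=-i\lim_{T\to\I}\int_t^{T}e^{i(t-s)\Delta}P_{>M}\mathcal{N}[w](s)\,ds
\]
holds in the weak $L^2$ sense. Pairing this expression with itself (the double Duhamel trick) gives
\[
\|P_{>M}w(t)\|_{L^2}^2=\lim_{T,T'\to\I}\int_t^{T}\!\!\int_t^{T'}\bigl\langle P_{>M}\mathcal{N}[w](s),\;e^{i(s-s')\Delta}P_{>M}\mathcal{N}[w](s')\bigr\rangle\,ds'\,ds,
\]
and I would control the right-hand side by the same Strichartz/Bernstein scheme used in~\cite{KTV}, exploiting $P_{>M}$ to extract negative powers of $M$. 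A finite iteration, seeded by the tail bound above, then upgrades the a~priori smallness to the polynomial gain~\eqref{reg}.

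The main obstacle is handling the non-local factor $V[w]$ in the dual Strichartz norms of $P_{>M}\mathcal{N}[w]$: unlike the local cubic $|w|^2w$, $V[w]$ is defined by a spatial integral and does not interact cleanly with $P_{>M}$ as a product. I would dominate it using the pointwise bound
\[
0\leq V[w](x)\leq \frac{1}{2\pi|x|^2}\int_{|y|>|x|}|w(y)|^2\,dy,
\]
which reduces the non-local piece to an $|x|^{-2}$-weighted local cubic; the trivial Hardy-type inequality $\||x|^{-1}w\|_{L^2}\leq \|\nabla w\|_{L^2}$, automatic in the first-angular-momentum sector, then places $V[w]w$ in the same dual Strichartz spaces as $|w|^2w$ with comparable norms. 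After this adaptation, the KTV iteration runs without further essential change and yields the stated estimate.
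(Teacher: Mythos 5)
There is a genuine gap, and it sits exactly where the paper has to do new work. Your plan is to dominate the non-local potential pointwise, $V[w](x)\lec |x|^{-2}\|w\|_{L^2}^2$, and then treat $V[w]w$ as an $|x|^{-2}$-weighted local term via Hardy. This cannot work inside the KTV iteration, for two reasons. First, the whole scheme hinges on applying $P_{>AT^{-1/2}}$ to the nonlinearity \emph{after} decomposing $w$ into high/medium/low frequency pieces as in \eqref{decomp}, so that every surviving term of the paraproduct expansion contains a high-frequency factor; a pointwise majorization is incompatible with a frequency projection and destroys precisely the cancellation from which the negative powers of $A$ are extracted. Second, the bound $\||x|^{-1}w\|_{L^2}\lec\|\nabla w\|_{L^2}$ costs a derivative of $w$ -- the very regularity the lemma is trying to establish -- so the step is circular; the correct, derivative-free way to place $w\,I(|w|^2)$ in dual Strichartz spaces is \eqref{tri}, i.e.\ $\|I(f)\|_{L^p}\lec\|f\|_{L^p}$ via $r\p_r I(f)=-f$. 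What the adaptation actually requires, and what your proposal omits, is: (a) the observation that $I(f)$ inherits the frequency localization of $f$ (the paper computes $\hat{I}$ explicitly to see this), so one can still assume each term of $P_{>AT^{-1/2}}(wI(|w|^2))$ contains the factor $w_{>(A/8)T^{-1/2}}$; and (b) the genuinely new obstruction in the term $w_{>(A/8)T^{-1/2}}\,I(|w_{\leq\sqrt{A}T^{-1/2}}|^2)$, where the high- and low-frequency factors do not sit in a local product and the bilinear Strichartz estimate of \cite{KTV} is unavailable -- the paper replaces it by Shao's radial Strichartz estimate \eqref{Shao} plus Bernstein, accepting the weaker decay $A^{-(2/q-1/2)}$, which still closes the iteration.

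A secondary problem is your ``double Duhamel'' step: you pair the forward-in-time representation with itself. With both time integrals over $(t,\I)$ there is no separation $|s-s'|\gec |s-t|$ near the diagonal, so the pairing is literally $\lan X,X\ran=\|X\|_{L^2}^2$ and gains nothing over estimating the single forward integral; the genuine double Duhamel needs one integral forward and one backward, and in the self-similar case the backward boundary term does not vanish ($N(t)\to\I$ as $t\to0^+$). The self-similar regularity in \cite[Section 5]{KTV} (not Section 4) is instead a finite iteration on $M(A)=\sup_{t}\|w_{>At^{-1/2}}(t)\|_{L^2}$, driven by Strichartz estimates on bounded time intervals and the nonlinear recurrence of their Lemma 5.3, seeded by the compactness tail bound that you do correctly identify.
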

As a consequence,
\begin{equation}  \label{upper}
  \| w(\cdot, t) \|_{\dot H^s(\R^2)} \lec t^{-s/2} = [N(t)]^s.
\end{equation}
Indeed, after re-scaling $w(x,t) = N(t) \tilde{w}( N(t) x, t)$,
equation~\eqref{reg} reads
\[
  \int_{|\xi| > A} |\hat{\tilde{w}}(\xi,t)|^2 d \xi \leq C_s A^{-s}
\]
for all $t$, from which follows $\| \tilde{w} \|_{\dot H^s} \lec 1$,
and thus (undoing the scaling)~\eqref{upper}.

Now fix a small $\eta > 0$, and large $T$.
A (localized) interpolation (just as in Section~\ref{soliton})
between~\eqref{upper} with $s=2$ and the $L^2$ smallness from 
compactness, gives 
\[
  \int_{2C(\eta)/N(t)}^\I 
  \left\{ |q_r|^2 + \frac{|q|^2}{r^2} + \mu |q|^4 \right\}
  \; r \; dr \lec \eta^{1/2} \| w \|_{\dot H^2(\R^2)}
  \lec \eta^{1/2} (N(t))^2.
\]
Using this, with $\eta$ small enough, and Lemma~\ref{lower}, 
in~\eqref{locvir}, we find, for $t < T$, and 
$R = 2C(\eta)/N(T) > 2C(\eta)/N(t)$,
\[
  \frac{d}{dt} I_R(q) \gec N^2(t) = \frac{1}{t},
\]
and hence for $T \gg 1$,
\[
  I_R(q)(T) \gec I_R(q)(1) + \int_1^T \frac{dt}{t}
  \gec \log(T).
\]
On the other hand
\[
  |I_R(q)(T)| \lec R \| q(T) \|_{L^2} \| q(T) \|_{\dot H^1}
  \lec \frac{C(\eta)}{N(T)} N(T) = C(\eta).
\]
The last two inequalities are in contradiction for $T$ large
enough, and so the self-similar-type blowup is ruled out.

\subsection{The inverse-cascade case}

Here $I = \R$, $N(t) \lec 1$, and 
$\liminf_{t \to -\I} N(t) = \liminf_{t \to \I} N(t) = 0$.

The main tool is a variant of the Morawetz 
identity~\eqref{Morawetz}. Set 
\[
  \psi(r) := \left\{ 
  \begin{array}{cc} 
  4 r - r^2 & 0 < r \leq 1 \\ 
  6 - \frac{4}{r} + \frac{1}{r^2} & 1 < r < \infty
  \end{array} \right. .
\]
It is easily checked that for $r \in (0,\I)$,
\begin{itemize}
\item $\psi \in C^3$
\item $0 < \psi < 6$
\item $\psi_r > 0$
\item $\al(r) :=  \frac{1}{2} \psi_r + \frac{3}{2} \frac{\psi}{r} 
- r \psi_{rr} - \frac{1}{2}r^2 \psi_{rrr} > 0$
\item $\be(r) := \frac{\psi}{r} - \psi_r > 0$.
\end{itemize}
Set
\[
  P(q) := \int_0^\I Im(\bar{q} q_r) \psi(r) r \; dr.
\]
For solutions of~\eqref{NLS},
an elementary computation gives:
\begin{lem}
\EQ{  \label{pos}
  \frac{d}{dt} P(q) = 
  \int_0^\I \{ \; 2 \psi_r |q_r|^2 +  
  \al(r) \frac{|q|^2}{r^2} 
  + \left( \frac{K}{2} \be(r) + \mu (\frac{\psi}{r} + \psi_r) \right)
  |q|^4 \; \} r \; dr > 0.
}
\end{lem}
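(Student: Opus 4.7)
The plan is to differentiate $P(q)$ in time, substitute \eqref{NLS} for $q_t$, and then perform the standard radial Morawetz integrations by parts, being careful to track how the non-local piece contributes. Writing $\Im(\bar q q_r) = (\bar q q_r - q\bar q_r)/(2i)$ and $q_t = i\Delta q - iUq$ with the real potential $U(r,t) := r^{-2} + W$, $W(r,t) := K \int_r^\I |q(\r,t)|^2 d\r/\r - (\la/2)|q|^2$, the $U$-contributions multiplying $\bar q q_r - q\bar q_r$ cancel pairwise, giving the pointwise identity
\[
\partial_t \Im(\bar q q_r) = \Re\bigl( \bar q \partial_r \Delta q - \Delta \bar q \cdot q_r \bigr) - (\partial_r U)\,|q|^2.
\]

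Next I would expand $\Delta = \partial_r^2 + r^{-1}\partial_r$ in the bracket, multiply by $\psi(r)\,r$, and integrate by parts three times to transfer all $r$-derivatives off $q$. The standard radial Morawetz calculation produces the kinetic density $2\psi_r|q_r|^2$ together with $|q|^2$ terms whose coefficients involve $\psi_r/r^2$, $\psi_{rr}/r$, and $\psi_{rrr}$. The centrifugal contribution $-\partial_r(r^{-2}) = 2/r^3$ supplies an additional $+2\psi|q|^2/r^2$, and all $|q|^2$ pieces combine to exactly $\al(r)\,|q|^2/r^2$ by the defining formula for $\al$. For the nonlinear potential, $\partial_r W = -K|q|^2/r - (\la/2)\partial_r|q|^2$: the first piece produces $K\psi|q|^4/r$ directly, while the second, rewritten using $|q|^2 \partial_r|q|^2 = \tfrac{1}{2}\partial_r |q|^4$ and integrated by parts once, produces $-(\la/4)(\psi_r + \psi/r)|q|^4$. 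A short algebraic rearrangement using $2K - \la = 4\mu$ and $\psi/r - \psi_r = \be$ recasts the total quartic coefficient as exactly $(K/2)\be + \mu(\psi/r + \psi_r)$, matching \eqref{pos}.

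Positivity then follows: $\psi_r > 0$, $\al > 0$, and $\be > 0$ pointwise by the explicit construction of $\psi$; $\mu \geq 0$ by \eqref{coeffs}; and a case analysis (trivial when $K \geq 0$, and reducing to $[(4K+1)\psi/r + \psi_r]/4 \geq \psi_r/4 > 0$ in the only remaining case $\la = -1$, $-1/4 \leq K < 0$) ensures the quartic coefficient is nonnegative. Since the kinetic density $2\psi_r|q_r|^2$ is already strictly positive for any nontrivial $q$, this yields $\frac{d}{dt}P(q) > 0$. The main obstacle is computational rather than conceptual: the kinetic integrations by parts must be performed cleanly (with three successive applications), and one must verify that all boundary terms at $r=0$ (where $\psi \sim 4r$) and $r = \I$ (where $\psi \to 6$ and $\psi_r \sim r^{-2}$) vanish. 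This is easily ensured by the smoothness and decay of $q$ from Proposition~\ref{minimal}, together with a standard density argument extending the identity from smooth approximants to strong solutions.
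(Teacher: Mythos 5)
Your proposal is correct and is exactly the standard Morawetz computation that the paper leaves implicit as ``an elementary computation'': the quartic coefficient you derive, $K\frac{\psi}{r}-\frac{\la}{4}(\psi_r+\frac{\psi}{r})$, does equal $\frac{K}{2}\be(r)+\mu(\frac{\psi}{r}+\psi_r)$, and the $|q|^2$ terms (kinetic plus the centrifugal $-\p_r(r^{-2})$ contribution) do assemble into $\al(r)|q|^2/r^2$. Your positivity case analysis is also the right one, since the standing assumption~\eqref{coeffs} permits $K<0$ only when $\la=-1$ with $K\ge -1/4$, in which case the quartic coefficient reduces to $\frac{4K+1}{4}\frac{\psi}{r}+\frac{1}{4}\psi_r>0$.
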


Note that since $|\psi(r)| \lec 1$,
\EQ{  \label{mom}
  |P(q)| \lec \| q \|_{L^2} \| q_r \|_{L^2} \lec \| q_r \|_{L^2}. }
Next recall that for some sequences $t_n \to -\I$, 
$T_n \to +\I$, $N(t_n) \to 0$ and $N(T_n) \to 0$.
It then follows easily from the definition of $N(t)$
that $\| q_r(t_n) \|_{L^2} \to 0$ and $\| q_r(T_n) \|_{L^2} \to 0$.
Hence by~\eqref{mom},
\EQ{ \label{mom2}
  P(q(t_n)) \to 0, \quad\quad
  P(q(T_n)) \to 0. }

If $P(q_0) \geq 0$, then~\eqref{pos} implies
$P(q(t)) > 0$ and increasing for $t > 0$, while if $P(q_0) < 0$,
then~\eqref{pos} implies $P(q(t)) < 0$ and increasing for $t < 0$.
In either case,~\eqref{mom2} is contradicted.     
This rules out the inverse cascade-type blowup.

\medskip

\noindent
{\it Proof of Theorem~\ref{thm:NLS}}:
Having ruled out the possible blow-up scenarios,
Proposition~\ref{prop:local} gives global well-posedness
(and scattering) for $q_0 \in L^2$. In particular, 
using $\| q \|_{L^4_{x,t}(\R^2 \times [0,\I))} < \I$, 
and applying Strichartz estimates to derivatives
of~\eqref{NLS}, one obtains, in a standard way,
the ``propogation of regularity'': 
if $e^{i \th} q_0(r) \in H^k$
for $k=1$ or $k=2$, then $e^{i \th} q(r,t)$ is bounded
in $H^k$ on bounded time intervals (see, eg., \cite{CSU}).
$\Box$

\section{Minimal mass blowup scenarios}
\label{ktv}

Here we present the proofs of Proposition~\ref{prop:local}
and Proposition~\ref{minimal}, following~\cite{KTV} very
closely. Indeed, nearly the entire argument in Sections 4-7 
of~\cite{KTV} for the {\it local}
radial cubic NLS carries over, line-by-line, to our 
{\it non-local} equation~\eqref{NLS}. 
So we give here only a rough outline of the arguments,
emphasizing details only where they differ significantly
from~\cite{KTV}. 

\begin{rem}
Of course the method of {\it ruling out} blow-up used 
in~\cite{KTV} relies on conservation of energy, and so does 
{\it not} apply to~\eqref{NLS}
-- hence the alternative method given in Section~\ref{rule}.
\end{rem}

In most places where the nonlinearity needs to be estimated,
the elementary Hardy-type inequality for radial functions
\EQ{ \label{hardy}
  \| f(r) \|_{L^p} \lec \| r f_r \|_{L^p}, \quad\quad 
  1 \leq p < \I }
(used for this purpose in~\cite{CSU}, for example), 
together with H\"older, suffices to handle the non-local term: 
for $1 \geq \frac{1}{p} = \frac{1}{p_1} + \frac{1}{p_2} + \frac{1}{p_3}$, $\frac{1}{s} = \frac{1}{p_2} + \frac{1}{p_3} > 0$,
\EQ{ \label{tri}
\begin{split}
  \left\| q_1(r) \int_r^\I |q_2(\rho)| |q_3(\rho)| 
  \frac{d \rho}{\rho} \right\|_{L^p} &\leq 
  \| q_1 \|_{L^{p_1}} \left\|  
   \int_r^\I |q_2(\rho)||q_3(\rho)| \frac{d \rho}{\rho} 
  \right\|_{L^s} \\ &\lec \| q_1 \|_{L^{p_1}} \| |q_2||q_3| 
  \|_{L^s} \leq \| q_1 \|_{L^{p_1}} \| q_2 \|_{L^{p_2}}
  \| q_3 \|_{L^{p_3}}.
\end{split} }

Another convenience is to work with the function
$w(x,t)$ given in polar coordinates by 
\EQ{ \label{vortex}
  w(x,t) = e^{i \theta} q(r,t), }
for which equation~\eqref{NLS} becomes
\EQ{ \label{NLS2}
  i w_t = -\Delta w + \left( K \int_{|y| \geq |x|}
  \frac{|w(y)|^2}{|y|^2} dy - \frac{\la}{2} |w|^2 \right) w. }
The advantage is that here the true Laplacian replaces
$\Delta - 1/r^2$ in~\eqref{NLS}. While $w(x,t)$ is not
radial, all the estimates in~\cite{KTV} which require
radial symmetry, apply also to functions of the 
form~\eqref{vortex} (equivalently, replacing $\Delta$
by $\Delta - 1/r^2$ on radial functions -- which indeed
generally only {\it improves} decay at the origin), 
as we shall explain below. Throughout this section we will
use both representations $q(r,t)$ and $w(x,t) = e^{i \th} q(r,t)$,
and corresponding equations~\eqref{NLS} and~\eqref{NLS2},
as needed.

\subsection{The local theory}
\label{local}

{\it Proof of Proposition~\ref{prop:local}.}
Except for the ``stability'' statement, the proof is a mild 
extension of the proof of the classical result 
of Cazenave-Weissler (see~\cite{C}) 
for the (local) cubic NLS, as applied to equation~\eqref{NLS2}
for $w(x,t)$. The main ingredient is the
Strichartz estimate, our version of which follows from
a version of~\eqref{tri} (with $p=4/3$) generalized
to $w(x,t)$:
\[
\begin{split}
  \| w \int_{|y| \geq |x|} \frac{|w(y)|^2}{|y|^2} dy \|_{L^{4/3}_x} 
  &\lec \| w \|_{L^4_x} \| \int_{|y| \geq r} \frac{|w(y)|^2}{|y|^2} dy
  \|_{L^2_x} \\
  &\lec \| w \|_{L^4_x} \| r \frac{\p}{\p r}
  \int_0^\I \frac{dr}{r} \int_0^{2 \pi} |w(r,\theta)|^2 \|_{L^2_x} \\
  &= \| w \|_{L^4_x} \| \int_0^{2 \pi} |w(r,\theta)|^2 d \theta
  \|_{L^2_x} \lec \| w \|_{L^4_x}^3
\end{split}
\]
(using H\"older in the last step), 
which leads to an estimate of the non-local nonlinear term
in dual Strichartz spaces, which is the same as that for the
local cubic term:
\EQ{ \label{hardyuse}
  \left\| w_1 \int_{|y| \geq r} \frac{|w_1(y)|^2}{|y|^2} dy 
    -w_2 \int_{|y| \geq r} \frac{|w_2(y)|^2}{|y|^2} dy 
  \right\|_{L^{4/3}_{x,t}}
  \lec \left[ \|w_1\|_{L^4_{x,t}}^2 + \| w_2 \|_{L^4_{x,t}}^2 
  \right]  \| w_1-w_2 \|_{L^4_{x,t}}. }
We omit the details. 
Similarly, this inequality can be used to prove the 
``stability'' statement as a straightforward modification 
of the proof in~\cite[Lemma 3.6]{TVZ} for the local case.
$\Box$

\bigskip

\subsection{The blow-up scenarios.}
\label{enemy}

The proof in~\cite[Section 4]{KTV} 
of the existence of a minimal-mass
blowup solution, pre-compact modulo scaling, of soliton,
self-similar, or inverse-cascade type -- that is, of the 
the first two statements of Proposition~\ref{minimal} --
applies nearly without modification to~\eqref{NLS}.

The proof of existence of a pre-compact minimal blow-up solution
in~\cite{TVZ2} applies here. It rests primarily on the local 
theory, Proposition~\eqref{prop:local}, the cubic scaling,
and linear estimates (radial symmetry plays no role).
Where nonlinear estimates come in, in 
particular the ``asymptotic solvability''~\cite[Lemma 5.2]{TVZ2},
they are easily handled using~\eqref{tri}. 
Thus we have Proposition~\ref{minimal}, part (1). 

The argument of~\cite[Section 4]{KTV}, which again
rests on the local theory and the scaling, and
does not use radial symmetry, then carries over
directly to give us part (2) of Proposition~\ref{minimal}.

The \cite{KTV} proof of the regularity statements -- 
part (3) of Proposition~\ref{minimal} -- depends much
more heavily on nonlinear esimates and radial symmetry. 
We explain its adaptation to our setting in the next two
sub-sections.

\subsection{Regularity of self-similar blowups.}
\label{self-reg}

Here we work with $w(x,t) = e^{i \th} q(r,t)$,
a minimal-mass blowup solution of self-similar type.
So $t \in (0,\I)$, and $N(t) = t^{-1/2}$.

The arguments in \cite[Section 5]{KTV} are used to show
that for all $t > 0$, $s > 0$, and $A$ large enough,
\EQ{ \label{desired}
  \| w_{> At^{-1/2}}(t) \|_{L^2} \lec_{s,w} A^{-s}, }
where $w_{> N}$ denotes the Littlewood-Paley proection
of $w$ onto frequencies above $N$. In particular, this gives
$w(t) \in H^s$ for all $s \geq 0$, and all $t > 0$
-- that is, Part (3) of Proposition~\ref{minimal}. It is rephrased
as the Lemma~\ref{self} we used in Section~\ref{selfcase}.

The adaptation to our setting of the nonlinear 
estimate~\cite[Lemma 5.3]{KTV} requires some comment.
Key to this argument is a decomposition of $w$
into high-, medium-, and low-frequency components:
\EQ{ \label{decomp}
  w = w_{> (A/8)T^{-1/2}} + w_{\sqrt{A} T^{-1/2} < \cdot \leq
  (A/8)T^{-1/2}} + w_{\leq \sqrt{A} T^{-1/2}}. }
First, note that this decomposition preserves the 
form of function $w(x) = e^{i \theta} q(r)$
(each term is $e^{i \theta}$ multiplying a radial function). 

Second, note that the non-local nonlinearity behaves well with
respect to frequency decomposition. Denoting
\[
  I(f)(r) := \int_r^\I f(\rho) \frac{d \rho}{\rho}
\]
for a radial function $f(r)$, we have
$x \cdot \nabla I = r I_r = -f$, so 
\[
  \hat{f} = -\nabla_\xi \cdot \xi \h{I} = 
  -|\xi|^{-1} \p_{|\xi|} |\xi|^2 \h{I}
\]
and
\[
  \hat{I}(|\xi|) = \frac{1}{|\xi|^2}
  \int_{|\xi|}^\I \h{f}(|\eta|) |\eta| d|\eta|.
\]
Hence if $f$ is frequency localized in a particular disk, 
so is $I(f)$. So after decomposing $w$ as in~\eqref{decomp}, 
one can assume, exactly as in~\cite[Lemma 5.3]{KTV}, that
each term of the resulting expansion of the high frequency 
projection of the nonlinearity, $P_{> AT{-1/2}} ( w I(|w|^2))$,
must somewhere include the high frequency component 
$w_{>(A/8)T^{-1/2}}$.    

The estimates in this Lemma then carry over, 
using~\eqref{tri} as needed, with one exception: the
use of the bilinear Strichartz inequality to estimate 
nonlinear terms containing two low-frequency factors.
The problem occurs in the non-local nonlinear term
when the high-frequency factor fall outside the integral,
as in
\[
  w_{>(A/8)T^{-1/2}} I(|w_{\leq \sqrt{A} T^{-1/2}}|^2).
\]
This term does not involve a (local) product of a low-frequency
and a high-frequency ``approximate solution'' of the 
Schr\"odinger equation, and so it is unclear how to 
apply the bilinear Strichartz estimate to it.

We can get around this problem by replacing the use of bilinear
Strichartz with an application Shao's Strichartz estimate
for radial functions~\cite{Sh}
\EQ{ \label{Shao}
  \| P_N e^{i t \Delta} f \|_{L^q_{x,t}(\R \times \R^2)} \lec
  N^{1-4/q} \| f \|_{L^2(\R^2)}, \quad\quad q > 10/3 }
plus a Bernstein estimate.  
\begin{rem}
Note that~\eqref{Shao} is for radial functions, while
our functions are of the form $w(x)= e^{i \th}q(r)$.
In fact it is easily checked that Shao's argument
applies also for such functions -- it is essentially a matter of 
replacing the Bessel function $J_0$ with $J_1$, which has the
same spatial asymptotics (and better behaviour at the origin).
The same is true for the weighted Strichartz 
estimate~\cite[Lemma 2.7]{KTV}, which is also used in 
the~\cite{KTV} argument we are following.  
\end{rem}
Indeed, since $I(|w_{\leq M}|^2)$ is frequency-localized below $M$,
applying H\"older, Shao, Bernstein, and Hardy, we have, for any
$10/3 < q < 4$
\[
\begin{split}
  \| I P_N e^{i t \Delta} f \|_{L^{4/3}_{x,t}} &\lec
  \| I \|_{L^{\frac{4q}{3q-4}}_{x,t}} 
  \| P_N e^{i t \Delta} f \|_{L^q_{x,t}} \lec
  M^{\frac{4}{q}-1} \| I \|_{L^\frac{4q}{3q-4}_t L^{\frac{4q}{q+4}_x}}
  N^{1-\frac{4}{q}} \| P_N f \|_{L^2} \\  &= 
  \left( \frac{M}{N} \right)^{\frac{4}{q}-1}
  \| w_{\leq M} \|_{L^\frac{8q}{3q-4}_t L^{\frac{8q}{q+4}_x}}^2
  \| P_N f \|_{L^2},
\end{split}
\]
and the middle factor is a Strichartz norm, so is 
bounded by a constant. By this argument, using also the inhomogeneous
version of~\eqref{Shao} (which follows in the usual way), and
replacing $P_N$ by $P_{\geq N}$ (which follows easily by summing over
dyadic frequencies), we can finally arrive at the nonlinear
estimate~\cite[Lemma 5.3]{KTV}, albeit with a slower decay factor
$A^{-(2/q-1/2)}$ replacing $A^{-1/4}$ (notice $0< 2/q-1/2 < 1/10$).
This lower power does not matter, however, and the remaining 
estimates from Section 5 of \cite{KTV} carry through, to 
establish the desired estimate~\eqref{desired}.

\subsection{Regularity of soliton and inverse-cascade
blowups.}
\label{sol-rg}

The~\cite[Section 6]{KTV} proof of regularity for the global cases --
the soliton- and inverse-cascade-type blowup solutions --
relies heavily on the radial symmetry, particularly through a 
decomposition of solutions into ``incoming''and ``outgoing'' waves,
defined by projections with Bessel function kernels.
These projections are defined analogously for functions
$w(x) = e^{i \th} q(r)$ by simply replacing the Bessel (and 
Hankel) functions of order zero with those of order one:
$J_0 \to J_1$, $H_0^{\alpha} \to H_1^{\alpha}$.
It is easily checked that these (new) projections obey the 
kernel estimates listed in \cite[Proposition 6.2]{KTV},
essentially because $J_1$ and $H_1$ have the same behaviour
as $J_0$ and $H_0$ away from the origin \cite[eqns. (77), (79)]{KTV}.
(At the origin, $J_1$ is {\it better} behaved, while $H_1$ is 
worse -- though this plays no role in the estimates.)

Given this, the subsequent estimates of~\cite[Section 7]{KTV}
all carry over to our case, as above using~\eqref{tri} where 
needed to estimate the non-local nonlinearity, to establish
$w \in L^\I_t H^s_x$ for any $s > 0$. 

This completes the proof of Proposition~\ref{minimal}.
$\Box$

\section{Relating Schr\"odinger maps to the nonlocal NLS}
\label{relate}

Here we prove Proposition~\ref{prop:q}.

\subsection{Construction of the frame}
Following~\cite{CSU}, given a radial map 
$\M{u}(r) \in \h{k} + H^k$, we want to construct a 
unit tangent vector field, parallel transported along
the curve $\M{u}(r) \in \S^2$:
\EQ{ \label{e}
  \h{e}(r) \in T_{\M{u}(r)} \S^2, \quad
  |\h{e}| \equiv 1, \quad
  D_r \h{e}(r) \equiv 0, }
where here $D$ denotes covariant differentiation
of tangent vector fields:
given $\M{\xi}(s) \in T_{\M{u}(s)} \S^2$,
\[
  D_s \M{\xi}(s) = P_{T_{\M{u}(s)} \S^2} \p_s \M{\xi}(s)
  = \p_s \M{\xi}(s) + ( \p_s \M{u}(s) \cdot \M{\xi}(s) ) \M{u}(s)
  \in T_{\M{u}(s)} \S^2.
\]
Since we have fixed the boundary condition (at infinity) 
$\M{u}(r) \to \h{k}$ as $r \to  \I$ (at least in the $L^2$ sense),
we fix a unit vector in $T_{\h{k}} \S^2$, say $\h{i} = (1,0,0)$ to
be the boundary condition for $\h{e}$ (at infinity) and write
\[
  \h{e}(r) = \h{i} + \t{e}(r), \quad
  \M{u}(r) = \h{k} + \t{u}(r)
\]
so that the parallel transport equation $D_r \hat{e} \equiv 0$
becomes
\EQ{ \label{par}
  \t{e}_r = - (\t{u}_r \cdot [\h{i} + \t{e}(r)])(\h{k} + \t{u})
  = - ( \t{u}_1 )_r \h{k}  - (\t{u} \cdot \t{e})\M{u} 
  - (\t{u}_r \cdot \h{i}) \t{u}, }
which we will therefore solve in from infinity as
\EQ{ \label{par2}
  \t{e}(r) = - \t{u}_1(r) \h{k} + \int_r^\I \left\{
  (\t{u}(s) \cdot \t{e}(s))\M{u}(s) 
  - (\t{u}_r(s) \cdot \h{i}) \t{u}(s) \right\} ds
  =: M ( \t{e} )(r) }
by finding a fixed point of the map $M$
in the space $X^2_R := L^2_{r dr}([R,\I);\R^3)$
for $R$ large enough. 
To this end, we need the simple estimate
\begin{lem}
\label{X}
\[
  \| \int_r^\I f(s) ds \|_{X^2_R} \leq 
  \| f \|_{L^1_{r dr}[R,\I)} =: \| f \|_{X^1_R}.
\]
\end{lem}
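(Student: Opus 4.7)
The plan is to reduce the inequality to a pointwise bound on $F(r) := \int_r^\infty f(s)\,ds$ and then apply Minkowski's integral inequality (equivalently, Fubini after squaring) in the $L^2_{r\,dr}([R,\infty))$ norm. Writing $F(r) = \int_R^\infty \mathbf{1}_{\{s>r\}} f(s)\,ds$ and taking Euclidean norms on $\R^3$ componentwise, we have the pointwise bound $|F(r)| \leq \int_R^\infty \mathbf{1}_{\{s>r\}}\,|f(s)|\,ds$. Minkowski then gives
$$\|F\|_{X^2_R} \;\leq\; \int_R^\infty |f(s)|\,\bigl\|\mathbf{1}_{\{s>r\}}\bigr\|_{L^2(r\,dr;[R,\infty))}\,ds.$$

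The inner norm is an elementary direct computation: for $s \geq R$,
$$\bigl\|\mathbf{1}_{\{s>r\}}\bigr\|_{L^2(r\,dr;[R,\infty))}^2 \;=\; \int_R^s r\,dr \;=\; \frac{s^2-R^2}{2} \;\leq\; \frac{s^2}{2}.$$
Substituting yields $\|F\|_{X^2_R} \leq \frac{1}{\sqrt 2}\int_R^\infty |f(s)|\,s\,ds = \frac{1}{\sqrt 2}\|f\|_{X^1_R}$, which is even stronger than the stated bound.

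The essential observation is that the weight $r\,dr$ defining $X^2_R$ is precisely what compensates the length factor $s$ arising from integrating the characteristic function of $[R,s]$; this dimensional matching is what makes the estimate constant-free. I do not anticipate any real obstacle: the proof is essentially a one-line Minkowski/Fubini argument, requiring only a moment's care to handle the vector-valued integrand via pointwise absolute values before invoking the scalar inequality.
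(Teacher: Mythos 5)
Your proof is correct, and it takes a genuinely different route from the paper's. The paper first establishes the pointwise decay bound $|F(r)| \leq r^{-1}\|f\|_{X^1_R}$ (by writing $\int_r^\infty f = \int_r^\infty \tfrac{1}{s} f(s)\, s\,ds$ and pulling out $\tfrac1s \le \tfrac1r$), then uses the identity $F^2(r) = 2\int_r^\infty F(s)f(s)\,ds$ (from $F'=-f$, $F(\infty)=0$), interchanges the order of integration in $\|F\|_{X^2_R}^2$, and combines the two to get $\|F\|_{X^2_R}^2 \le \|f\|_{X^1_R}\,\|F\|_{X^2_R}$ before dividing through. Your argument bypasses both the pointwise lemma and the squaring trick: after reducing to scalars via $|F(r)| \le \int_R^\infty \mathbf{1}_{\{s>r\}}|f(s)|\,ds$, a single application of Minkowski's integral inequality plus the elementary computation $\|\mathbf{1}_{\{s>r\}}\|_{L^2(r\,dr;[R,\infty))}^2 = (s^2-R^2)/2 \le s^2/2$ finishes the proof, and even yields the sharper constant $1/\sqrt{2}$. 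Since the paper's intermediate pointwise bound is not used elsewhere, nothing is lost by your shortcut; your version is the cleaner and more quantitative of the two, while the paper's has the minor virtue of being entirely self-contained (using only H\"older, Fubini, and the fundamental theorem of calculus rather than invoking Minkowski's integral inequality).
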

\begin{proof}
First by H\"older, for $r \geq R$,
\EQ{ \label{point}
  | \int_r^\I f(s) ds | = | \int_r^\I \frac{1}{s} f(s) s ds |
  \leq \frac{1}{r} \| f \|_{X^1_R}. }
Next, setting $F(r) :=  \int_r^\I f(s) ds$ so $F'=-f$, we have
$F^2(r) = 2\int_r^\I F(s) f(s) ds$, so changing order of integration
and using~\eqref{point},
\[
\begin{split}
  \| F \|_{X^2_R}^2 &= 2 \int_R^\I r dr \int_r^\I F(s) f(s) ds \leq
  2 \int_R^\I |F(s)| |f(s)| ds \int_R^s r dr  \\ 
  &\leq
  \int_R^\I |F(s)| s |f(s)| s ds \leq \sup_{r \geq R} (r |F(r)|)
  \| f \|_{X^2_R} \leq \| f \|_{X^1_R} \| f \|_{X^2_R} 
\end{split}
\]
and the proof is completed by dividing through by
$\| f \|_{X^2_R}$.
\end{proof}
Now we may use Lemma~\ref{X} to estimate the map $M$:
\[
  \| M (\t{e}) \|_{X^2_R} \leq \| \t{u} \|_{X^2_R}
  + \| |\t{u}(s)|(|\t{e}(s)|+|\t{u}_r(s)|) \|_{X^1_R}
  \leq \| \t{u} \|_{X^2_R} + 
  \| \t{u} \|_{X^2_R} \| \t{e} \|_{X_R^2} +
  \| \t{u} \|_{X^2_R} \| \t{u}_r \|_{X^2_R}.
\]
Since $\t{u} \in H^1(\R^2)$, there is $R_0$ such that for 
$R \geq R_0$, $\| \t{u} \|_{X^2_R} < 1/3$ and 
$\| \t{u} \|_{X^2_R} \| \t{u}_r \|_{X^2_R} < 1/3$,
so
\[
  \| \t{e} \|_{X_{R}^2} \leq 1 \implies
  \| M(\t{e}) \|_{X_{R}^2} \leq 1,
\]
that is, $M$ sends the unit ball in $X_{R}^2$ to itself.
Also, for any $\t{e}^A$, $\t{e}^B \in  X_{R}^2$, 
\[
  \| M(\t{e}^A) - M(\t{e}^B) \|_{X_R^2}
  \leq \| \t{u} \|_{X^2_R} \| \t{e}^A - \t{e}^B \|_{X^2_R}
  < \frac{1}{3} \| \t{e}^A - \t{e}^B \|_{X^2_R},
\]
so $M$ is a contraction on the unit ball in $X_R^2$, 
hence has a unique fixed point there.

Using $\t{u} \in H^2(\R^2)$,
it follows from~\eqref{par}, that $\t{e}_r \in X^2_R$,
$\t{e}/r \in X^2_R$, and, after differentiating once,
$\t{e}_{rr} \in X^2_R$. In particular, $\t{e}$ is 
continuously differentiable, so a genuine solution of~\eqref{par}. 

Now we may simply solve the initial value problem for
the linear ODE~\eqref{par} from $r=R$ (with value $\t{e}(R)$) 
down to $r=0$ to get $\t{e}$ on $(0,\I)$. 
Estimates as above imply that that $\t{e} \in H^2(\R^2)$
(and in particular is continuous, and defined at $r=0$).
It is easily shown that if, in addition, $\t{u} \in H^3(\R^2)$,
then $\t{e} \in H^3(\R^2)$.

So we have constructed a solution $\h{e}(r) = \h{i} + \t{e}(r)$
of $D_r \h{e} \equiv 0$.
It then follows directly from this ODE that 
$\p_r (\M{u}(r) \cdot \h{e}(r)) \equiv 0$ and
$\p_r (\h{e} \cdot \h{e}) \equiv 0$ and hence that
$\h{e}(r) \in T_{\M{u}(r)} \S^2$ and $|\h{e}(r)| \equiv 1$. 
So we have~\eqref{e}.

\subsection{Equation for $q(r,t)$}
Given a radial Schr\"odinger map
$\M{u}(r,t)$ on a time interval $t \in [0,T)$ 
with $\M{u}(\cdot,t) \in \h{k} + H^k(\R^2)$ for $k = 2$ or $k=3$,
for each $t \in [0,T)$ we construct the vector field $\h{e}$
as above, yielding $\h{e}(r,t)$.
By the ODE~\eqref{par2} for $\t{e}$, $\h{e}(r,t)$ has the same 
time-regularity as $\M{u}(r,t)$ -- i.e. 
$\h{e}_t \in L^\I_t H^{k-2}(\R^2)$.

Now for each $r$ and $t$, $\h{e}(r,t)$ and 
$J^{\M{u}(r,t)} \h{e}(r,t) = \M{u}(r,t) \times \h{e}(r,t)$
form an orthonormal frame on $T_{\M{u}(r,t)} \S^2$, and
so we may express
\[
  T_{\M{u}(r,t)} \S^2 \ni \M{u}_r(r,t) 
  = q_1(r,t) \h{e}(r,t) + q_2(r,t) J^{\M{u}(r,t)} \h{e}(r,t).
\]   
It is shown in \cite{CSU} that the complex-valued function
$q(r,t) = q_1(r,t) + i q_2(r,t)$ then satisfies 
equation~\eqref{NLS1} (we will not repeat the derivation 
here).

\subsection{Equivalence of norms}

We have
\[
  \M{u}_r = q_1 \h{e} + q_2 J \h{e} =: q \circ \h{e}
\]
(the last equality just defines a convenient notation), so
\[
  |q| = |\M{u}_r|,
\]
and since $D_r \h{e} \equiv 0$,
\[
  q_r \circ \h{e} = D_r (q \circ \h{e}) = D_r \M{u}_r
  = \M{u}_{rr} + |\M{u}_r|^2 \M{u}
\]
so
\[
  |q_r| \leq |\M{u}_{rr}| + |\M{u}_r|^2, \quad\quad
  |\M{u}_{rr}| \leq |q_r| + |q|^2.
\]
Setting $w(x) = e^{i\th}q(r)$, and taking norms:
\[
  \| w \|_{H^1(\R^2)} \lec
  \| q_r \|_{L^2} + \| q/r \|_{L^2} \lec
  \|\M{u}_{rr}\|_{L^2} + \|\M{u}_r\|_{L^4}^2 + \|\M{u}_r/r\|_{L^2}
  \lec \|\nabla \M{u}\|_{H^1(\R^2)} + \|\nabla \M{u}\|_{H^1(\R^2)}^2
\]
(using a Sobolev inequality at the end).
And in the opposite direction,
\[
  \| \nabla \M{u} \|_{H^1(\R^2)} \lec
  \| \M{u}_{rr} \|_{L^2} + \| \M{u}_r/r \|_{L^2} \lec
  \| q_r \|_{L^2} + \| q \|_{L^4}^2 + \| q/r \|_{L^2} \lec
  \| w \|_{H^1(\R^2)} + \| w \|_{H^1(\R^2)}^2.
\]
These last two inequalities give~\eqref{norms1}.
Taking another covariant derivative in $r$ 
and proceeding in a similar way yields~\eqref{norms2}.

\subsection{One-to-one}

Suppose $\M{u}^A(r)$ and $\M{u}^B(r)$ are two maps
in $\h{k} + H^2(\R^2)$, and let $\h{e}^A(r)$, $\h{e}^B(r)$,
and $q^A(r)$, $q^B(r)$ be the corresponding unit tangent vector 
fields, and complex functions (respectively) constructed as above.
If we also denote $\h{f} := J \h{e}$, we have the linear ODE system
\[
  \frac{d}{dr}
  \left( \begin{array}{c} \M{u} \\ \h{e} \\ \h{f} \end{array} \right)
  = \left( \begin{array}{ccc} 0 & q_1 & q_2 \\
                            -q_1 & 0 & 0 \\
			    -q_2 & 0 & 0 \end{array} \right)
  \left( \begin{array}{c} \M{u} \\ \h{e} \\ \h{f} \end{array} \right)
  =: A(q) \left( \begin{array}{c} \M{u} \\ \h{e} \\ \h{f} \end{array} \right).
\]
Suppose now that $q^A(r) \equiv q^B(r) =: q(r)$. Then we have
\[
  W := \left( \begin{array}{c} \M{u}^A \\ \h{e}^A \\ \h{f}^A 
  \end{array} \right) - \left( \begin{array}{c} \M{u}^B \\ \h{e}^B \\
    \h{f}^B \end{array} \right)
  \in H^2(\R^2), \quad\quad
  W_r = A(q) W.
\]
Applying the estimate of Lemma~\ref{X}, we find
\[
  \| W \|_{L^2_{rdr}[R,\I)} \leq C \| W_r \|_{L^1_{r dr}[R,\I)}
  \leq C \| q \|_{L^2_{r dr}[R,\I)} \| W \|_{L^2_{r dr}[R,\I)}.
\]
Choosing $R$ large enough so that 
$\| q \|_{L^2_{r dr}[R,\I)} < 1/C$, we conclude
$W \equiv 0$ on $[R,\I)$. Then standard uniqueness for 
initial value problems for linear ODE implies
$W(r) \equiv 0$ for all $r$.
  
This completes the proof of Proposition~\ref{prop:q}. 
$\Box$  

\section{Global Schr\"odinger maps}
\label{global}

Theorem~\ref{thm:NLS}, together with Propostion~\ref{prop:q},
provides the a priori bounds on solutions of~\eqref{SM} needed
to prove Theorem~\ref{thm:SM}, via a standard approximation argument,
as we explain here.

\medskip

\noindent
{\it Proof of Theorem~\ref{thm:SM}}:
Problem~\eqref{SM} is known to be locally well-posed
for smoother initial data.
In particular, for $\nabla \M{u}_0 \in H^2$,
\cite{McG} furnishes a unique local solution of~\eqref{SM} with
$\nabla \M{u} \in L^\I([0,T);H^2(\R^2))$,
which may be continued as long as 
$\| \nabla \M{u}(t) \|_{H^2(\R^2)}$ remains finite.
This solution conserves energy~\eqref{COE} and
furthermore, by~\eqref{COM}, if $\M{u}_0 - \h{k} \in L^2$,
then $\| \M{u}(t) - \h{k} \|_{L^2}$ remains constant.

Let $w(x,t) = e^{i \th} q(r,t) \in L^\I([0,T);H^2(\R^2))$ 
be the corresponding solution of~\eqref{NLS1} furnished by 
Proposition~\ref{prop:q}. By Theorem~\ref{thm:NLS}, the solution 
can be extended globally, and moreover satisfies a bound of the form
\[
 \| w(\cdot,t) \|_{H^1(\R^2)} \leq C_{\| w(\cdot,0) \|_{H^1}}(t)
  < \I. 
\]
So invoking Proposition~\ref{prop:q} again, we find 
\EQ{ \label{apriori}
  \| \M{u}(\cdot,t) - \h{k} \|_{H^2(\R^2)}
  \leq C_{\| \M{u}_0 - \h{k} \|_{H^2}}(t) < \I.  }

Now suppose merely $\M{u}_0 - \h{k} \in H^2(\R^2)$.
Approximate $\M{u}_0 - \h{k}$ in $H^2(\R^2)$ by maps
$\M{u}^j_0 - \h{k} \in H^3(\R^2)$ (this can be done 
maintaining the constraint $|\M{u}^j_0| \equiv 1$
since $H^2(\R^2) \subset L^\I(\R^2)$), and let 
$\M{u}^j(t)$ be the corresponding global solutions of~\eqref{SM}. 
Using the a priori bound~\eqref{apriori} for $\M{u}^j$,
a standard argument (see, eg. \cite{C}) shows that one can pass to a 
limit and obtain a solution $\M{u}(x,t)$ of~\eqref{SM} with  
$\M{u} - \h{k} \in L^\I_{loc}([0,\I);H^2(\R^2))$.

Finally, uniqueness of this solution follows from 
Proposition~\ref{prop:q} and uniqueness of solutions
of the cubic NLS~\eqref{NLS}.
$\Box$

\section*{Acknowledgments}
S.G. would like to acknowledge the hospitality of the Institute Henri 
Poincar\'e, where the idea for this work began, F. Merle for
encouraging discussions, and M. Visan for some helpful clarifications. 


\bigskip

\noindent{Stephen Gustafson},  gustaf@math.ubc.ca \\
Department of Mathematics, University of British Columbia, 
Vancouver, BC V6T 1Z2, Canada

\bigskip

\noindent{Eva Koo}, evahk@math.ubc.ca \\
Department of Mathematics, University of British Columbia, 
Vancouver, BC V6T 1Z2, Canada

\end{document}